\newtheorem{thm}{Theorem}[section]
\newtheorem{cor}[thm]{Corollary}
\newtheorem{lem}[thm]{Lemma}
\newtheorem{prop}[thm]{Proposition}
\numberwithin{equation}{section}
\begin{document}

\title{{  \bf  Finiteness of  prescribed   fibers of   local biholomorphisms: a geometric approach
}}

\author{{\bf F. Xavier}}
\author{Xiaoyang Chen \footnote{The first author is supported in part by NSF DMS-1209387.} \ and Frederico Xavier }
 \date{}
\maketitle

\begin{abstract}
\noindent  Let  $X$ be a Stein manifold of complex dimension at least two, $F:X  \to \mathbb C^n$   a local biholomorphism,  and $q\in F(X)$.  In this paper we formulate sufficient conditions, involving only objects  naturally associated to $q$, in order  for  the fiber over $q$ 
 to be finite. Assume that  $F^{-1}(l)$ is $1$-connected  for the  generic complex line $l$ containing   $q$, and   $F^{-1}(l)$ has    finitely many
  components whenever  $l$ is    an exceptional line through  $q$. Using  arguments from topology and  differential geometry,   we    establish    a sharp estimate on the size of $F^{-1}(q)$.    It follows that  for $n\geq 2$
  a    local biholomorphism   of  $X$ onto $\mathbb C^n$   is  invertible if and only if    the pull-back of every complex line is   $1$-connected.
 \end{abstract}
\section{Introduction.}
\vskip10pt

The  theory  of holomorphic  maps  $F:X \to \mathbb C^n$,  where $X$ is a connected non-compact complex manifold of dimension $n\geq 2$,    has  been the object of  much research   over the years.
As expected,   most  of these works  make extensive  use  of  various  analytical tools, a prime example being Nevanlinna theory.

In   this paper, by contrast,   we use  only    topological and geometric arguments to address   the  problem   of estimating the size  of  a  fiber, specified in advance,   of a given local biholomorphism:

\begin{thm} \label{line}Let $X$ be a connected non-compact complex manifold of dimension at least two,  $F: X\to \mathbb C^n$ a local biholomorphism, and $q$ a point in the image of $F$. Assume that
\vskip3pt
\noindent a) $X$ carries a  complete K$\ddot{ a}$hler metric of  negative holomorphic sectional curvature.
\vskip3pt
\noindent b) The pre-image of the generic complex line containing   $q$ is $1$-connected.
\vskip3pt
\noindent c)  The pre-image of  any  exceptional  complex line through  $q$ has  finitely many
components.
\vskip3pt
\noindent Then,  the fiber $F^{-1}(q)$ is finite. In fact,
\begin{eqnarray} \label {sharp} \#F^{-1}(q)\leq \min_{V\subset \mathbb C^n} \max_{l\in V} \{\# \text {components of } \; F^{-1}(l)\}, \end{eqnarray}
where the minimum is taken over all  complex $2$-planes $V$ containing $q$, and the maximum is taken over all  complex lines $l$  in $V$ that pass through $q$.
\end{thm}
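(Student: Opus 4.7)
The plan is to fix a complex 2-plane $V$ through $q$ and exhibit a line $l^\ast \subset V$ through $q$ with $\#\{\text{components of } F^{-1}(l^\ast)\} \geq N$, where $N := \#F^{-1}(q)$; the estimate (\ref{sharp}) then follows by minimizing over $V$. A first reduction uses the disjointness of connected components of $F^{-1}(V)$: points of $F^{-1}(q)$ lying in different components of $F^{-1}(V)$ already contribute distinct components to every $F^{-1}(l)$, so one may assume $p_1,\ldots,p_N \in F^{-1}(q)$ all lie in a single component $W \subset F^{-1}(V)$ and set $G := F|_W : W \to V$.

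The central construction is a double blow-up. Let $\tilde V \to V$ be the blow-up of $V \cong \mathbb C^2$ at $q$, with exceptional divisor $E \cong \mathbb P^1$ and tautological projection $\Pi_V : \tilde V \to \mathbb P^1$, and let $\tilde W \to W$ be the blow-up at $\{p_1,\ldots,p_N\}$, with exceptional divisors $E_1,\ldots,E_N$, each $\cong \mathbb P^1$ with self-intersection $-1$. The local biholomorphism $G$ lifts to a local biholomorphism $\tilde G : \tilde W \to \tilde V$, and $\Pi := \Pi_V \circ \tilde G : \tilde W \to \mathbb P^1$ then has the $E_i$ as $N$ pairwise disjoint holomorphic sections, each meeting every fiber $\Pi^{-1}(l)$ transversally in exactly one point. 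By (b) together with (a) (which yields Brody hyperbolicity of $X$, hence of $W$), every generic fiber $\Pi^{-1}(l)$ is biholomorphic to the disc $\mathbb D$, and by (c) every exceptional fiber has only finitely many components.

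Hyperbolicity also forces the $E_i$ to be the only holomorphic sections of $\Pi$: any other section $\mathbb P^1 \to \tilde W$ would blow down to a constant point in the hyperbolic $W$, and must therefore lie inside one of the $E_i$. Writing $\Pi^{-1}(l) = C_1 \sqcup \cdots \sqcup C_{k(l)}$ and using that $[E_i] \cdot [\Pi^{-1}(l)] = 1$ with nonnegative summands, each $E_i$ meets exactly one component $C_{j_l(i)}$; the desired inequality $k(l) \geq N$ is equivalent to injectivity of this attachment map $j_l$. When $V$ contains no exceptional line, every fiber of $\Pi$ is a disc, and local trivializability of the submersion (from the uniform hyperbolic structure on the fibers) makes the sections $E_i$ all homologous. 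Then $0 = E_i \cdot E_j = E_i \cdot E_i = -1$ for $i \neq j$ is a contradiction, so $N = 1$ and the bound is trivial.

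The remaining case --- when $V$ contains at least one exceptional line --- is where I expect the main obstacle. The idea is a degeneration analysis at an exceptional $l_0$: if the attachment map $j_l$ failed to be injective for every $l$, then every fiber would merge at least two of the disjoint sections $E_i,E_{i'}$ into the same component, and an intersection-theoretic count on the reducible fiber $\Pi^{-1}(l_0)$, combined with the rigidity that the only holomorphic sections of $\Pi$ are the $E_i$ themselves, should force a contradiction. Successfully converting the pointwise attachment data into this global separation statement, so that the three hypotheses (a), (b), (c) cooperate, is the subtlest step of the argument and the principal obstacle in carrying out the proof.
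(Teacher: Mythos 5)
Your blow-up framework is a reasonable way to organize the combinatorics, but the proposal has two fatal gaps. The first is the claim that hypothesis a) yields Brody hyperbolicity of $X$, hence of $W$. This is false: $\mathbb C^n$ itself carries a complete K\"ahler metric of negative sectional curvature (Seshadri's construction, cited in the paper), so $X=\mathbb C^n$ satisfies a) while admitting plenty of nonconstant entire curves; negative holomorphic sectional curvature without a negative upper bound gives no hyperbolicity on a noncompact manifold. Everything you derive from hyperbolicity therefore collapses: the generic fibers $\Pi^{-1}(l)$ need not be discs (in the paper's Example B they are graphs over $\mathbb C$, hence copies of $\mathbb C$), the rigidity statement that the $E_i$ are the only sections of $\Pi$ is unjustified, and the ``uniform hyperbolic structure'' you invoke to trivialize the (non-proper) fibration --- and hence to conclude $[E_i]=[E_j]$ and run the $0=E_i\cdot E_j=E_i\cdot E_i=-1$ contradiction --- is unavailable. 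What a) actually buys, in combination with completeness and the $1$-connectivity in b), is that each generic fiber is a Cartan--Hadamard surface, so that any two of its points are joined by a \emph{unique} geodesic depending continuously on $l$. That uniqueness, not hyperbolicity, is the engine of the argument.

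The second gap is that you explicitly leave open the case in which $V$ contains exceptional lines, which is where all the real work lies; without it there is no proof. The paper's route is the following: pick $p_1\neq p_2$ in the fiber and use the unique geodesic $\omega_l$ from $p_1$ to $p_2$ in $F^{-1}(l)$ to define a nowhere-zero continuous section $l\mapsto dF(\omega_l'(0))$ of the tautological bundle $L$ over $\mathbb C\mathbb P^1-\Sigma$ (this is the honest substitute for your homology-of-sections device, and it works whether the fibers are biholomorphic to $\mathbb D$ or to $\mathbb C$). If the fiber had more than $k$ points, pigeonhole on hypothesis c) puts two fiber points $p_3,p_4$ in a single component of each exceptional $F^{-1}(l_\alpha)$; the geodesic from $p_3$ to $p_4$ gives a second section that extends continuously across $l_\alpha$ (this requires a separate uniqueness lemma for geodesics in the possibly non-simply-connected exceptional fiber), and a geodesic homotopy --- after a small compactly supported perturbation of the metric that puts the four points in general position --- shows the two sections are homotopic on a small circle around $l_\alpha$. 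Hence every local index vanishes and Poincar\'e--Hopf contradicts $e(L)=-1$. A smaller but real issue: since finiteness of the fiber is the conclusion, you cannot blow up at ``all $N$ points'' of $F^{-1}(q)\cap W$ at the outset, and if you blow up only a finite subset, $\tilde G$ fails to extend across the remaining fiber points.
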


The topological hypotheses b) and c) in Theorem \ref{line} are essential for the validity of the theorem, and  the estimate  (\ref{sharp}) is sharp (see Section $3$). The  necessity for  the geometric  hypothesis a), on the other hand,   seems to be  a  subtler  matter,    and remains an unsettled question.

The  word \it generic \rm  in the statement of  Theorem \ref{line}  is, of course,  to be understood in the usual sense.  For the sake of clarity,  and to establish notation, we  recall its meaning.  After a translation by $-q$, the set of complex lines in $\mathbb C^n$ that pass through $q$ is naturally identified with the complex projective space $\mathbb C \mathbb P^{n-1}$.  A property  $\cal P$ about such  lines  is  satisfied  generically   if there exists a complex hypersurface $\Sigma \subset \mathbb C \mathbb P^{n-1}$  such that $\mathcal P$ holds for all   elements  of $ \mathbb C \mathbb P^{n-1}-\Sigma$.  Lines associated to $\Sigma$ are deemed non-generic or \it exceptional\rm. If  $n=2$,
 $\cal P$ is satisfied generically if it holds for all lines  arising from the complement of a finite  subset of $\mathbb C \mathbb P^1$.

It should be pointed out  that  the estimate on the size of the fiber $F^{-1}(q)$ involves only objects naturally associated to $q$.  We also observe that  when  $n=2$     the actual  finite number of exceptional   lines in  Theorem \ref{line} is not relevant for the estimate on the size of the fiber.
In this case, (\ref{sharp}) reduces to
\begin{eqnarray} \label{sharp'}\#F^{-1}(q)\leq \max_l \{\# \text {components of } \; F^{-1}(l)\},\end{eqnarray}
where a)-c) are in force,  and the maximum is taken over all complex lines $l$  containing   $q$.

The theorem   is new  even   in the classical case  $X=\mathbb C^n$, $n\geq 2$. In fact, it holds   if $X$ is a Stein manifold.  Indeed, an explicit construction of  complete K$\ddot{\text a}$hler metrics in $\mathbb C^N$ with  negative sectional curvature, hence  with negative holomorphic sectional   curvature,  can be found in  \cite{S} (see also \cite{K}, \cite{SZ}).  Being  Stein, $X$ can be  properly  embedded as  a complex submanifold of some $\mathbb C^N$. Endowing the latter with a metric as above, it follows from (\cite{KN}, p.164, 176) that the induced metric on $X$   satisfies  a) in Theorem \ref{line}.

\begin{cor} Let $X$ be a connected Stein manifold of  complex  dimension at least two,  $F: X\to \mathbb C^n$ a local biholomorphism, and $q$ a point in the image of $F$. If   $F^{-1}(l)$ is $1$-connected  for the  generic complex line $l$ containing   $q$, and   $F^{-1}(l)$ has    finitely many
  components whenever  $l$ is    an exceptional line through  $q$, then    $F^{-1}(q)$  is finite. \end{cor}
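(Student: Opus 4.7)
The plan is to derive the corollary directly from Theorem \ref{line} by producing, from the Stein hypothesis alone, a Kähler metric on $X$ fulfilling assumption a). Hypotheses b) and c) of the corollary match b) and c) of the theorem verbatim, so the entire task reduces to verifying a).

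First, I would invoke Remmert's embedding theorem to realize $X$ as a closed (properly embedded) complex submanifold of some $\mathbb{C}^N$. Next, I would equip $\mathbb{C}^N$ with one of the complete Kähler metrics of strictly negative sectional curvature constructed in \cite{S} (or by the methods of \cite{K}, \cite{SZ}). Since negative sectional curvature implies negative holomorphic sectional curvature a fortiori, the ambient metric is of the type we want.

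The substantive step is to transfer these two properties (completeness and negative holomorphic sectional curvature) to the induced metric on $X$. Completeness is immediate from properness of the embedding: a Cauchy sequence in $X$ with respect to the induced distance has bounded intrinsic length, hence bounded extrinsic length in $\mathbb{C}^N$; by completeness of the ambient metric it has a limit in $\mathbb{C}^N$, which must lie in $X$ by closedness, giving convergence in $X$. Negativity of the holomorphic sectional curvature is the classical Gauss-equation computation for complex submanifolds found in Kobayashi--Nomizu (vol.~II, pp.~164, 176): the second fundamental form contributes a nonpositive correction so that, at every point and in every holomorphic tangent direction, the holomorphic sectional curvature of the submanifold is bounded above by that of the ambient manifold. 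Since the latter is strictly negative, so is the former.

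With a) verified, Theorem \ref{line} applies to $X$ and $F$, and $F^{-1}(q)$ is finite. I do not anticipate a genuine obstacle here: the argument is a soft reduction, and the only place where care is required is in quoting the correct curvature comparison for complex (as opposed to real) submanifolds, which is why the specific Kobayashi--Nomizu reference is invoked rather than a general Gauss equation.
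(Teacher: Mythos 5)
Your proposal is correct and follows essentially the same route as the paper: Remmert embedding of the Stein manifold $X$ into some $\mathbb{C}^N$, the complete negatively curved K\"ahler metrics of \cite{S} on the ambient space, and the curvature-decreasing property of K\"ahler submanifolds from \cite{KN} to verify hypothesis a) of Theorem \ref{line}. The extra detail you supply on completeness via properness of the embedding is implicit in the paper but harmless and correct.
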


The next  result    is a direct consequence of   estimate (\ref{sharp}):

\begin{cor}  \label{2} Let $X$ be a connected complex $n$-manifold that carries a complete  K$\ddot{a}$hler  metric of  negative  holomorphic sectional curvature, and   $F:X \to \mathbb C^n$   a local biholomorphism,  $n\geq 2$.   If the pre-image under $F$ of every affine complex line  that intersects $F(X)$ is  $1$-connected, then $F$ is injective.
\end{cor}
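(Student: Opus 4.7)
The plan is to reduce the injectivity of $F$ to a pointwise application of Theorem \ref{line}. Fix an arbitrary $q \in F(X)$; it suffices to show $\#F^{-1}(q) = 1$. I would verify that all three hypotheses of Theorem \ref{line} are in force at this $q$. Hypothesis (a) is the standing assumption on $X$. For hypothesis (b), any complex line $l$ through $q$ satisfies $q \in l \cap F(X)$, so by the corollary's assumption $F^{-1}(l)$ is $1$-connected; in particular this holds for the generic line through $q$. For hypothesis (c), the same observation applies to every exceptional line $l$ through $q$: its pre-image is $1$-connected, hence connected, and therefore consists of a single (in particular finite number of) component.

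Next I would invoke the sharp estimate (\ref{sharp}). Since every pre-image $F^{-1}(l)$ appearing on the right-hand side has exactly one component, the double extremum collapses:
\begin{equation*}
\#F^{-1}(q) \;\leq\; \min_{V \ni q}\, \max_{l \subset V,\, l \ni q} \#\{\text{components of } F^{-1}(l)\} \;=\; 1.
\end{equation*}
Because $q \in F(X)$, the fiber $F^{-1}(q)$ is nonempty, so $\#F^{-1}(q) = 1$. Letting $q$ range over $F(X)$ yields injectivity of $F$.

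The only subtlety to keep in mind is the verification of hypothesis (c): one should not worry about which lines through $q$ are generic and which are exceptional, since the blanket $1$-connectedness assumption uniformly supplies the connectedness (and hence the finiteness-of-components) property for every line through $q$ whose pre-image is nonempty. Beyond this small piece of bookkeeping there is no real obstacle, as all of the differential-geometric and topological work has been absorbed into Theorem \ref{line}.
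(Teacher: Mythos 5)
Your proposal is correct and matches the paper's intended argument: the paper presents this corollary as a direct consequence of estimate (\ref{sharp}), exactly as you argue, since every line through $q$ meets $F(X)$ and thus has $1$-connected (hence one-component) pre-image, collapsing the right-hand side to $1$.
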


\begin{cor}  A  surjective local biholomorphism    $F:\mathbb C^n \to \mathbb C^n$, $n\geq 2$,  is invertible  if  and only if   the pre-image of every  complex line  is  a $1$-connected set.
\end{cor}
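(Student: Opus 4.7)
The plan is to reduce this statement to Corollary \ref{2}, which has already been established.

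The forward implication is immediate: if $F$ is invertible, then $F$ is a global biholomorphism, so $F^{-1}(l)$ is biholomorphic to $l\cong\mathbb{C}$ for every complex line $l$, and in particular is connected and simply connected.

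For the converse, I would verify the hypotheses of Corollary \ref{2} with $X=\mathbb{C}^n$. First, $\mathbb{C}^n$ is Stein, so the discussion given in the paragraph preceding the first corollary (using the constructions in \cite{S}, \cite{K}, \cite{SZ}) supplies a complete K\"ahler metric on $\mathbb{C}^n$ of negative holomorphic sectional curvature. Second, the surjectivity of $F$ guarantees that $F(X)=\mathbb{C}^n$, so every affine complex line in $\mathbb{C}^n$ meets $F(X)$; by the standing hypothesis, the pre-image of every such line is $1$-connected. Corollary \ref{2} then yields that $F$ is injective. A bijective local biholomorphism between equidimensional complex manifolds is automatically a biholomorphism, so $F$ is invertible.

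There is essentially no obstacle in this argument: the corollary is a packaging of Corollary \ref{2} in the special case $X=\mathbb{C}^n$, where the required negatively curved K\"ahler metric is explicit and surjectivity makes the clause ``intersects $F(X)$'' vacuous. The only point to monitor is consistency in the meaning of ``$1$-connected'' (namely, non-empty, connected, and simply connected), which matches its usage in the earlier results.
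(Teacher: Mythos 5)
Your proof is correct and follows exactly the route the paper intends: the corollary is deduced from Corollary \ref{2} applied to $X=\mathbb C^n$, using the complete K\"ahler metric of negative holomorphic sectional curvature on $\mathbb C^n$ described before the first corollary, with surjectivity making the ``intersects $F(X)$'' clause vacuous and the forward implication being trivial. The paper leaves this deduction implicit, and your write-up supplies precisely the intended details.
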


The proof of Theorem \ref{line}   represents   a substantial  elaboration of    the basic  idea   that motivated  the main result  in \cite{NX1}:
\vskip5pt
\noindent \it A local biholomorphism $F: Y\to \mathbb C^n$, $n\geq 2$, is injective if the pre-image of every complex line that intersects $F(Y)$ is conformal to  a connected rational curve.  \rm
\vskip5pt
(As communicated to us by S. Nollet, in the special case of certain algebraic maps the above result follows from the famous  Bend-and-Break Lemma of S. Mori.) However,   we stress that   the techniques  and results in   \cite{NX1}  are quite different  from the ones in  the present paper.

The  starting point  of these investigations is   the naive   observation  that  a  map  into $\mathbb C^n$ that has  discrete fibers is  injective if and only if the pre-image of  any given point is a connected set.

The next logical step  consists in trying  to infer  injectivity (\cite{B},\cite {NX1}) or,   in the present case, the finiteness of fibers,  from the  requirement  that the pre-images of one-dimensional objets (such as  complex lines), rather than points, are  connected.

Thus far   injectivity   can be achieved  if $X$ carries the right geometry, but only  under the stronger assumption that  the pull-backs of  {\em all}  complex lines that intersect the image are not only connected, but actually $1$-connected (Corollary \ref{2}).

In what follows we  elaborate on the relationship between   differential geometry  and   the   problem of estimating the size of  a   fiber of a local biholomorphism. Along the way,  we provide  a rough outline for   the proof of Theorem \ref{line}.

The pre-image of a generic complex line is a $1$-connected two-dimensional  real surface that   is properly embedded in $X$ (hence complete  relative to  the induced  metric). Furthermore, it  has  negative curvature (more generally, the holomorphic  sectional curvature of a K$\ddot{\text a}$hler submanifold is not greater than that of the ambient manifold  (\cite{KN}, p.164, 176)).

Thus, the pre-image of such a complex line is a Cartan-Hadamard surface, and therefore any two distinct points in it can be joined by a unique geodesic.

After reducing the proof of Theorem \ref{line} to the (complex) two-dimensional case,  in Theorem \ref{est},  one can take advantage of the above mentioned uniqueness  of geodesics and    construct a continuous nowhere vanishing section $s$  of the restriction of the tautological line bundle $L$ to the complement of the  finite set in $\mathbb C \mathbb P^1$  that corresponds to the exceptional lines whose pre-images are  not $1$-connected.

One then proceeds to show -- and this is the heart of the matter --,  that if the estimate  (\ref{sharp}) does not hold then each one of the  singularities of $s$ must have  index zero.

The arguments  employed to  establish the vanishing of the local indices   involve  once again  the use of geodesics, this time  to  deform continuously the section $s$ into another one whose  local index at the singularity in question is manifestly zero.    By the Poincar\'e-Hopf theorem,  the Euler number  $e(L)$ of $L$ is  the sum of the local indices of all the singularities of the section $s$, and so  it would be  zero,  contradicting  $e(L)=-1$.

The present paper is part of a larger program whose aim is to study global invertibility and related questions in the complex-analytic setting, using geometric, analytic and topological tools
(\cite{FX},  \cite {NX1}, \cite{NTX}, \cite{X3}).

\section{Proof of  the main result. }
\vskip10pt

Theorem \ref{line}  admits  a  slightly stronger formulation, which we will  state  and prove in this section.  Replacing $F$ by $F-F(p)$, where $F(p)=q$,  we may assume    $q=0$.
\begin{thm}\label{est} Let $X$ be a connected complex $n$-manifold that carries a complete  K$\ddot{a}$hler  metric of  negative  holomorphic sectional curvature,
$n\geq 2$, $F:X \to \mathbb C^n$  a local biholomorphism,   $0\in F(X)$.
Assume the existence of a    subspace $V\subset \mathbb C^n$,  $\text{dim} V=2$,  and  of an  integer $k\geq1$ such that:
\vskip5pt
\noindent a) $F^{-1}(V)$ is connected.
\vskip3pt
\noindent b) For  all but finitely many complex lines  $l\subset V$ that pass through $0$,   $F^{-1}(l)$ is $1$-connected.  If $l$ is  any of these  exceptional lines,  $F^{-1}(l)$ has at most $k$ components.
\vskip5pt
\noindent
Then the fiber $F^{-1}(0)$ has at most $k$ elements.
\end{thm}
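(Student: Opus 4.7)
The first step is to replace $X$ by the complex submanifold $Y := F^{-1}(V)$. Hypothesis (a) makes $Y$ connected, and since $F$ is a local biholomorphism, $F|_Y : Y \to V \cong \mathbb{C}^2$ is again a local biholomorphism between complex $2$-manifolds. The induced K\"ahler metric on $Y$ is complete (as $Y$ is closed in $X$, being the pre-image of a closed set under a continuous map) and its holomorphic sectional curvature is still negative by the submanifold inequality of \cite{KN}. Thus all hypotheses reduce to the case $n=2$ with $V = \mathbb{C}^2$, and I may forget $X$ and assume $F : Y \to \mathbb{C}^2$ has the stated properties. Identify the set of complex lines through $0$ in $\mathbb{C}^2$ with $\mathbb{C}\mathbb{P}^1$ and let $\Sigma \subset \mathbb{C}\mathbb{P}^1$ be the finite set of exceptional classes. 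For $[l] \notin \Sigma$, the properly embedded surface $F^{-1}(l)$ is $1$-connected and inherits a complete metric of negative sectional curvature, so it is Cartan--Hadamard; any two of its points are joined by a unique geodesic.

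\textbf{Construction of a section of the tautological bundle $L$.} Suppose for contradiction that $\#F^{-1}(0) \geq k+1$, and fix distinct points $p_1,\dots,p_{k+1}\in F^{-1}(0)$. For each $[l]\in\mathbb{C}\mathbb{P}^1\setminus\Sigma$ the $p_i$ all lie in the Cartan--Hadamard surface $F^{-1}(l)$; let $\gamma_l$ be the unique geodesic from $p_1$ to $p_2$ in $F^{-1}(l)$ and let $m_l$ be its midpoint. Since $F(\gamma_l)$ is a path in $l$ starting and ending at $0$, the point $s([l]) := F(m_l)$ lies in $l$, and therefore defines a section of the tautological line bundle $L$ over $\mathbb{C}\mathbb{P}^1\setminus\Sigma$. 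Continuity of $s$ follows from smooth dependence of the leaves $F^{-1}(l)$, and hence of their induced metrics and of the unique geodesic between two marked points, on the parameter $[l]$. The section is nowhere zero: if $F(m_l)=0$ then $m_l\in F^{-1}(0)\cap F^{-1}(l)$, which is discrete, but $m_l$ is the midpoint of a nondegenerate geodesic so $m_l\neq p_1,p_2$; if necessary one averages this construction (e.g.\ using the Fr\'echet mean of $p_1,\dots,p_{k+1}$ in $F^{-1}(l)$, well-defined on a Cartan--Hadamard surface) to avoid the remaining $p_i$'s.

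\textbf{Vanishing of local indices and contradiction.} The section $s$ extends to a continuous section of $L$ over all of $\mathbb{C}\mathbb{P}^1$ with isolated singularities concentrated at $\Sigma$. The crucial and hardest step is to show that \emph{each} such local index is zero. Here is where the hypothesis $\#F^{-1}(0)\geq k+1$ combined with the component bound in (b) is used: near an exceptional line $l_0$, the surface $F^{-1}(l_0)$ has at most $k$ components, so by pigeonhole two of $p_1,\dots,p_{k+1}$ lie in a common component $C$ of $F^{-1}(l_0)$. On $C$ (which is itself $1$-connected with negative curvature after a suitable smoothing, as a component of the limit), one can join those two points by a unique geodesic and continuously deform $s$ in a neighborhood of $[l_0]$ in $\mathbb{C}\mathbb{P}^1$ to a section built from geodesics staying in a fixed component family. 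This deformed section extends continuously through $[l_0]$, hence has local index $0$ there; by homotopy invariance, so does $s$. Applying the Poincar\'e--Hopf formula for sections of complex line bundles on compact oriented surfaces, the sum of indices equals the Euler number $e(L)=-1$ of the tautological bundle over $\mathbb{C}\mathbb{P}^1$. But the sum is $0$, a contradiction. Hence $\#F^{-1}(0)\leq k$.

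\textbf{Anticipated difficulty.} The principal obstacle is the index computation at exceptional points: one must convert the combinatorial input (at most $k$ components vs.\ at least $k+1$ preimages of $0$) into an actual continuous deformation of $s$ near $[l_0]$. This requires understanding the degeneration of the smooth family $\{F^{-1}(l)\}$ as $l\to l_0$, controlling how the geodesics $\gamma_l$ behave as their endpoints are pulled into possibly different components of $F^{-1}(l_0)$, and exploiting the negative curvature to obtain uniform estimates that make the deformation possible. Once this is accomplished, the Euler-number contradiction is immediate.
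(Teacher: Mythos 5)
Your overall architecture coincides with the paper's: reduce to $n=2$ via $F^{-1}(V)$, build a nowhere-zero section of the tautological bundle $L$ out of the unique geodesics joining two points of the fiber inside the Cartan--Hadamard surfaces $F^{-1}(l)$, use the pigeonhole principle to find, over each exceptional line, a single component containing two fiber points, kill the local index there, and contradict $e(L)=-1$ via Poincar\'e--Hopf. However, there are genuine gaps at the two places where the real work lies. First, your section $s([l])=F(m_l)$ (image of the geodesic midpoint) is not guaranteed to be nowhere zero: $m_l$ avoids $p_1$ and $p_2$, but nothing prevents it from landing on some \emph{other} point of the discrete set $F^{-1}(0)$ for some $l$, and a priori $F^{-1}(0)$ could even be infinite. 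Extra zeros of unknown index would destroy the Poincar\'e--Hopf count, and your Fr\'echet-mean fix only avoids the chosen points $p_1,\dots,p_{k+1}$, not the rest of the fiber. The paper instead takes $s(l)=dF(\omega_l'(0))$, the image under $dF$ of the initial velocity of the geodesic; this is automatically nonzero because the geodesic is nonconstant and $dF$ is everywhere nonsingular. You should replace your construction by this one (or prove that the midpoint never meets $F^{-1}(0)$, which is unclear).

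Second, the index-vanishing step is asserted rather than proved, and it rests on a false premise: you claim the component $C$ of the exceptional fiber containing two of the $p_i$ is ``itself $1$-connected \dots as a component of the limit.'' Hypothesis b) imposes no simple-connectivity on components of exceptional fibers, and Example C in Section 3 exhibits exceptional fibers whose components are genuinely not simply connected. Consequently, uniqueness of the geodesic joining the two chosen points in $C$ --- which is exactly what makes the second section extend continuously across $[l_0]$ and hence have vanishing index --- requires a real argument; the paper's Lemma~\ref{first} obtains it by transplanting two hypothetical geodesics to nearby generic fibers via the implicit function theorem and invoking uniqueness there. Moreover, the homotopy on $\partial \bar{D^2}$ between the original section and the one built from the new endpoints must remain inside the \emph{nonvanishing} sections; this forces one to rule out that three of the four points $p_1,p_2,p_3,p_4$ lie on a common geodesic of $F^{-1}(l)$ for $l$ near $l_0$, which the paper achieves only after a small compactly supported perturbation of the ambient metric (Lemma~\ref{metric}) together with a separation argument based on convexity (Lemmas~\ref{closed} and \ref{convex}). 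None of this appears in your sketch, and it is precisely the ``hardest step'' you defer.
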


We now proceed to show how Theorem \ref{line} follows from Theorem \ref{est}. Let $V\subset \mathbb C^n$ be a two-dimensional subspace, as in the statement of Theorem \ref{line}, with the property that
its image in projective space intersects $\Sigma$ (notation as in the Introduction) on a finite set. Since there are only finitely many one-dimensional subspaces $l\subset V$ for which $F^{-1}(l)$ is not $1$-connected,  hypothesis c) of Theorem \ref{line} implies the existence of $k\geq 1$ satisfying b) in Theorem \ref{est}. It is now clear that Theorem \ref{est} implies Theorem \ref{line} if one can establish that $F^{-1}(V)$ is connected.

To this end, let us argue by contradiction and write $F^{-1}(V)=U_1\cup U_2$, where $U_1, U_2$ are non-empty open sets   such that  $U_1\cap U_2=\emptyset$.

The space of lines in $V$ that pass through $q=0$   can be  naturally identified with the projective plane
$\mathbb C \mathbb P^1$.  If $\pi$ is the natural projection,  let   $E=\pi (V-\{0\})\cap \Sigma\subset \mathbb C \mathbb P^1$  be the finite (perhaps empty) set  corresponding to the  lines  whose pre-images are not $1$-connected.  As the pre-image $F^{-1}(l)$ of a complex line $l$ is connected when
 $\pi(l)\in \mathbb C \mathbb P^1-E$,   one has
\begin{eqnarray*}\{\pi(l)\in \mathbb C \mathbb P^1-E: F^{-1}(l) \cap U_j\neq \emptyset \}=\{\pi(l)\in \mathbb C \mathbb P^1-E: F^{-1}(l) \subset  U_j \}.\end{eqnarray*} For $j=1,2$, these  sets  are non-empty, open, disjoint, and their union is   $\mathbb C \mathbb P^1-E$. Since $E$ is finite, $\mathbb C \mathbb P^1-E$ remains connected, and we have  a contradiction. Thus,  as claimed, $F^{-1}(V)$ is connected, finishing the proof that Theorem \ref{line} follows from Theorem \ref{est}. \qed

\vskip10pt

The proof of  Theorem \ref{est} itself is fairly involved,  and will take the remainder of this section. We start by explaining  why it suffices to consider only the case $\text{dim}_{\mathbb  C} X = 2$.

Notice that, by b) of Theorem \ref{est}, $F^{-1}(0)$ is  contained   in  a single component of
$F^{-1}(V)$. Since the latter is connected, by a), one sees that
\begin{eqnarray} \label {restriction} F^{-1}(0)= (F|V)^{-1}(0). \end{eqnarray}

The metric induced from $X$ makes  $F^{-1}(V)$ into   a  connected  complete K$\ddot{\text a}$hler  surface  of  negative  holomorphic sectional curvature (\cite{KN}, p.164, 176).  In view of (\ref{restriction}), we may  therefore assume in Theorem \ref{est}   that  $X=F^{-1}(V)$,   and $F:X\to \mathbb C^2$ is a local biholomorphism satisfying b) of Theorem \ref{est}. In this context, $V=\mathbb C^2$ so that a) is trivially satisfied. Furthermore, $\Sigma \subset \mathbb C \mathbb P^1$ is a finite set.

After the above reduction to the case $\text{dim}_{\mathbb  C} X = 2$, we  argue  by contradiction and assume that $F:X\to \mathbb C^2$ satisfies
 $\#F^{-1}(0)> k \geq 1$.

 In particular,  one can choose distinct points   $p_1, p_2 \in F^{-1}(0)$.
For any $l \in \mathbb C \mathbb P^1- \Sigma$, since $F^{-1}(l)$ is properly embedded, it follows from (\cite{KN}, p. 164, 176),  that the holomorphic curve $F^{-1}(l)$ can be viewed as a complete connected real surface of  negative curvature,
relative  to the  metric induced  from the complete metric of negative holomorphic sectional curvature on $X$. On the other hand,
by assumption, $F^{-1}(l)$ is simply-connected.
Hence, at every point of  $F^{-1}(l)$,  the exponential map is a diffeomorphism by the  classical  Cartan-Hadamard theorem (\cite {Pet},  162-163). In particular, any two points in $F^{-1}(l)$ can  be joined by a \it unique \rm geodesic.

Again, for $l \in \mathbb C \mathbb P^1- \Sigma$,  let   $\omega_l:[0,1]\rightarrow F^{-1}(l)$ be the (unique) geodesic such that $\omega_l(0)=p_1, \omega _l(1)=p_2.$
It is easy to see that   $\omega_l'(0)$ depends  continuously on $l$.  This is so because   geodesics converge to geodesics in the $C^2$
topology,
together with the fact that  there is only one  geodesic in  $F^{-1}(l)$ joining  $p_1$ to $p_2$.

Consider the tautological line bundle over $\mathbb C \mathbb P^1$,  whose total space is
\begin{eqnarray} L=\{{(l,v)|l\in \mathbb C \mathbb P^1, v \in l}\}\subset \mathbb C \mathbb P^1 \times \mathbb C^2.\end{eqnarray}
It is a well-known fact that the Euler number of $L$ satisfies $e(L)=-1\neq 0$.

The map
\begin{eqnarray}
s: \mathbb C \mathbb P^1- \Sigma \rightarrow L, \;\;\;  l \mapsto dF(\omega_l'(0)),
\end{eqnarray}
will play a central role in our proof. Notice that, since $dF(p)$ is everywhere non-singular,    $s$ is a nowhere-zero continuous section of the restriction of the tautological line bundle to
the complement in $\mathbb C \mathbb P^1$ of the finite set   $\Sigma$.  Alternatively, we can view $s$ as a  section of $L$ with  (finitely  many) singularities at the  points of $\Sigma$.

Our strategy to prove Theorem \ref{est} consists in showing that, in the presence of the condition $\#F^{-1}(0)> k \geq 1$, the
local index of $s$ at each one of these singularities  is zero.  Then,  by Poincar$\acute{e}-$Hopf index theorem (\cite{Bot}, p. 123-124),
the Euler number of $L$  would be  zero, contradicting  $e(L)=-1$. Thus, as claimed in the theorem, one would have $\#F^{-1}(0)\leq k$.

For a   fixed  complex line $l_\alpha \in \Sigma$,  choose a  sufficient small neighborhood $D^2$ of $l_\alpha$ in $\mathbb C \mathbb P^1$
such that $D^2$ is diffeomorphic to an open ball in $\mathbb{R}^2$,  and the restriction of $L$ to $D^2$ is a trivial bundle.
The associated circle bundle to $L$, denoted by $SL$, is again a trivial bundle when restricted to $D^2$.

As an open subset of $\mathbb C \mathbb P^1$, $D^2$ is also oriented.
Choose the orientation on the circle $S^1$ in such a way that the isomorphism of  circle bundles
$SL_{|D^2}\simeq D^2 \times S^1$ is
orientation-preserving, where $D^2 \times S^1$ is given the product orientation.

Then,  the local index of the section $s$ at  the singularity $l_\alpha$ is the degree of the composite map:

\begin{eqnarray}
\Phi: \partial \bar{D^2} \rightarrow SL_{|\bar{D^2}}\simeq \bar{D^2} \times S^1 \rightarrow S^1, \;
l\mapsto \frac{s(l)}{\|s(l)\|} \mapsto \pi (\frac{s(l)}{\|s(l)\|}),
\end{eqnarray}
where $\bar{D^2}$ is the closure of $D^2$, $\partial \bar{D^2}$ is the boundary of $\bar{D^2}$ and $\pi: \bar{D^2} \times S^1 \rightarrow S^1$ is the projection.
Here, $\|s(l)\|^2=g_0(s(l),s(l))$,  where $g_0$ is the standard inner product on $\mathbb{C}^2$.
\\
\par On the other hand, since we are assuming $\#F^{-1}(0)> k \geq 1$,  and $F^{-1}(l_\alpha)$ has at most $k$ connected components,
there must exist  some  component $M^2$ of $F^{-1}(l_\alpha)$ that  contains at least two different points $p_3, p_4 \in F^{-1}(0)$.

Notice that $p_1, p_2, p_3,p_4$ are not necessarily pairwise  distinct, but $p_1\neq p_2$,  and $p_3\neq p_4$. One should also observe that, although $p_1$ and $p_2$ have been chosen at the very beginning of the proof, the choices of $p_3=p_3^{\alpha}$ and $p_4=p_4^{\alpha}$ depend, in principle, on the particular exceptional line $l_{\alpha}$.

Despite the fact that  the components of  $F^{-1}(l_\alpha)$  are  not assumed to be simply-connected in Theorem \ref{est}, one still has uniqueness of geodesics joining $p_3$ to $p_4$. This is the content of the next lemma.  Without it, Theorem \ref{est} would be slightly weaker. The conclusion  would still be the same, but condition c)  would  have to be  strengthened  to require  that  each connected component of  the pre-images of the exceptional lines is simply-connected.

 \vskip10pt
\begin{lem} \label{first}
There is a unique geodesic $\upsilon_{l_\alpha}:[0,1]\rightarrow M^2\subset F^{-1}(l_{\alpha})$ joining $p_3$ to $p_4$.
\end{lem}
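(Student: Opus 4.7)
My plan is to reduce the uniqueness question to the Cartan--Hadamard theorem applied to the universal cover, and then to rule out the obstruction that appears when $M^2$ itself may fail to be simply connected. First, I would verify that $M^2$, as a closed complex submanifold of $X$, inherits a complete K\"ahler metric from $X$. Since $X$ has strictly negative holomorphic sectional curvature and $M^2$ is complex one-dimensional, its holomorphic sectional curvature (which, for a complex one-dimensional K\"ahler manifold, is nothing but the Gauss curvature) is also strictly negative by (\cite{KN}, p.~164, 176). Thus $M^2$ is a complete, connected Riemannian $2$-surface of strictly negative Gauss curvature.

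I would then pass to the universal cover $\rho:\widetilde{M^2}\to M^2$ endowed with the pulled-back metric, which is a simply connected, complete surface of negative curvature. By Cartan--Hadamard, any two points of $\widetilde{M^2}$ are joined by a unique geodesic. Assuming, toward a contradiction, that $\gamma_1\neq \gamma_2$ are two geodesics in $M^2$ from $p_3$ to $p_4$, I would fix $\tilde p_3\in \rho^{-1}(p_3)$ and lift each $\gamma_i$ to a geodesic $\tilde\gamma_i$ in $\widetilde{M^2}$ starting at $\tilde p_3$; these would terminate at points $\tilde p_4^{(i)}\in \rho^{-1}(p_4)$, with $\tilde p_4^{(1)}\neq \tilde p_4^{(2)}$, for otherwise the uniqueness upstairs would force $\tilde\gamma_1=\tilde\gamma_2$ and hence $\gamma_1=\gamma_2$. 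Equivalently, the concatenation $c:=\gamma_1*\gamma_2^{-1}$ would be a nontrivial element of $\pi_1(M^2,p_3)$.

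The heart of the proof will be to convert such a nontrivial loop into a contradiction using the local biholomorphism $F|_{M^2}:M^2\to l_\alpha\cong \mathbb{C}$ into a simply connected target. Since $l_\alpha$ is simply connected, $F\circ c$ is null-homotopic in $l_\alpha$; the task is to lift the null-homotopy back through $F|_{M^2}$ to conclude that $c$ is null-homotopic in $M^2$, contradicting the previous paragraph. This is the main obstacle, because a local biholomorphism need not be a covering map and therefore does not automatically admit such liftings. To overcome it, I would try to exploit the family $\{F^{-1}(l)\}_{l\in \mathbb{CP}^1}$: for $l$ in a punctured neighborhood of $l_\alpha$ in $\mathbb{CP}^1-\Sigma$ the fiber $F^{-1}(l)$ is $1$-connected and contains the points $p_3,p_4$ (since $F(p_3)=F(p_4)=0\in l$). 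A plausible route is therefore to approximate $c$, or its null-homotopy in $l_\alpha$, by null-homotopies supported in the simply connected generic fibers, and then to pass to the limit $l\to l_\alpha$ using the $C^2$-compactness of geodesics on a complete surface of negative curvature (the same continuity principle that underlies the continuous dependence of $\omega_l'(0)$ on $l$ already used in the paper). Controlling this degeneration --- and in particular preventing the limiting null-homotopy from escaping to infinity in $M^2$ --- is the place where the completeness and strict negativity of the curvature of $M^2$ are expected to enter in an essential way.
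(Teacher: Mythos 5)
Your reduction---$M^2$ is a complete, connected surface of negative curvature, so two distinct geodesics from $p_3$ to $p_4$ would form a geodesic bigon lifting to paths with distinct endpoints in the universal cover, i.e. $c=\gamma_1*\gamma_2^{-1}$ would be a nontrivial element of $\pi_1(M^2,p_3)$---is correct and is the right way to frame what must be ruled out. But the contradiction is never actually derived: the step you yourself call ``the heart of the proof'' is left as a program rather than an argument. The route you sketch---push $c$ into $F^{-1}(l)$ for $l$ near $l_\alpha$, null-homotope it there, and pass to the limit $l\to l_\alpha$---stumbles on exactly the obstacle you name without overcoming: the null-homotopies are two-dimensional objects living in the varying surfaces $F^{-1}(l)$, and simple connectivity of $F^{-1}(l)$ gives no a priori bound on where the contracting disc must go, so there is no compactness with which to extract a limiting null-homotopy in $M^2$ (the $C^2$-compactness of \emph{geodesics} does not control \emph{homotopies}). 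Worse, the route as described never uses that $c$ is a geodesic bigon; if it worked it would apply verbatim to an arbitrary loop in $M^2$ and would prove $M^2$ simply connected---a conclusion the paper explicitly declines to claim (see the remark immediately preceding the lemma, which stresses that the components of $F^{-1}(l_\alpha)$ are \emph{not} assumed simply connected). Any correct argument must therefore exploit that $\gamma_1,\gamma_2$ are geodesics between the two points $p_3,p_4\in F^{-1}(0)$, which persist in $F^{-1}(l)$ for \emph{every} line $l$ through $0$.

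The paper's proof works at the level of the geodesics themselves rather than of homotopies. Using geodesics of $X$ normal to $M^2$, one constructs, for $l$ near $l_\alpha$, an immersion $\pi_{U,l}:U\to F^{-1}(l)$ of a bounded neighborhood $U\supset\gamma_1([0,1])\cup\gamma_2([0,1])$ that fixes $p_3$ and $p_4$, and one pulls back the induced metric of $F^{-1}(l)$ to a metric $g(l)$ on $U$. The implicit function theorem applied to $H(t,\xi)=\exp^{g(l(t))}_{p_3}(\xi)$---whose $\xi$-derivative at $t=0$ is invertible precisely because the curvature is non-positive, so there are no conjugate points---perturbs $\gamma_1$ and $\gamma_2$ into geodesics of $F^{-1}(l(t))$ joining $p_3$ to $p_4$ with initial vectors converging to $\gamma_1'(0)$ and $\gamma_2'(0)$ as $t\to 0$. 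Since $F^{-1}(l)$ is a Cartan--Hadamard surface for $l\neq l_\alpha$, these two perturbed geodesics must coincide, forcing $\gamma_1'(0)=\gamma_2'(0)$ and hence $\gamma_1=\gamma_2$. This perturbation-plus-uniqueness-downstairs mechanism is the missing core of your proposal; without it, what you have is a correct reduction followed by an unproved claim.
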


 \begin{proof} By the completeness of $M^2$, there is at least one such geodesic. Suppose that  $\gamma_1, \gamma_2: [0,1]\rightarrow M^2$ are two geodesics with  $\gamma_1(0)=\gamma_2(0)=p_3$ and
 $\gamma_2(1)=\gamma_2(1)=p_4$.  Let $U\subset M^2$ be a  bounded open set containing $\gamma_1([0,1])$ and $\gamma_2([0,1])$.
 For each $x\in U$, consider the  geodesics $\Gamma:(-\epsilon, \epsilon)\to X$ with $\Gamma(0)=x$ and $\Gamma'(0)\perp T_xM^2$.
 Let $\Sigma_x$ be the local surface generated by these local geodesics $\Gamma$. In particular, $\Sigma_x$ intersects $M^2$ transversally at $x$.
 It follows that for every complex line $l \in \mathbb{CP}^1$ sufficiently close to $l_{\alpha}$,  $\Sigma_x \cap F^{-1}(l)$ is a single point. In  fact, for every
 $x\in \overline U$, there is a neighborhood $U_x$ of $x$ in $M^2$, and a neighborhood $W_x$ of $l_{\alpha}$ in $\mathbb C \mathbb P^1$ such that, for every
 $y\in U_x$ and $l\in W_x$, one has  that $\Sigma_y \cap F^{-1}(l)$ is a single point.

 Since $\overline U$ is compact, it can be covered by finitely many open sets $U_x$. Letting $W$ be the intersection of the (finitely many) corresponding sets $W_x$, we can then define a map $\pi_{U,l}$, for each $l\in W$,  as follows:

\begin{eqnarray}
\pi_{U,l}:U \to F^{-1}(l),  \; x \mapsto \Sigma_x \cap F^{-1}(l).
 \end{eqnarray}

 Furthermore, shrinking $W$ if necessary,  $\pi_{U,l}$ is an immersion. We also remark that  the map $\pi_{U,l_{\alpha}}: U \to F^{-1}(l_{\alpha})$ is just the inclusion.
 For $l \in W$, consider $F^{-1}(l)$ with its induced metric from $X$, and denote by $g(l)$ the  metric on $U$ induced by the immersion $\pi_{U,l}$.

Denote by $\exp^{g(l)}_{p_3}$ the exponential map of  the metric $g(l)$, based at $p_3$.
Since $F$ is a local biholomorphism,  for any finite positive integer $k$ the submanifold $F^{-1}(l)$ converges to $F^{-1}(l_{\alpha})$,  $C^k$-uniformly  over  compact subsets of $X$,
as the complex line $l$ tends to $l_{\alpha}$.  In particular, taking $W$ smaller if necessary,
we can assume that  the domain of  $\exp^{g(l)}_{p_3}$ contains $\gamma_1'(0)$ and $\gamma_2'(0)$ for all $l\in W$.

Given a curve $(-\epsilon, \epsilon)\to W$, $t\mapsto l(t)$,  $l(0)=l_{\alpha}$, we want to find  a smooth curve $t\mapsto \xi(t)$
in $T_{p_3}X$, such that
\begin{eqnarray} \label{EXP} H(t,\xi(t)):=\exp^{g(l(t))}_{p_3} (\xi(t))=p_4, \;\;\; \xi(0)=\gamma_1'(0).
\end{eqnarray}
That such a curve $\xi(t)$ exists follows from the implicit function theorem. Indeed, one only needs to check that $\frac{\partial H}{\partial \xi}|_{t=0}= d \;exp^{g_{l_\alpha}}_{p_3}$ is invertible, and this is the case  as $M$ has  non-positive curvature.

 As a consequence of (\ref{EXP}),  for small enough $t\neq 0$ there is a geodesic in the Riemannian manifold $(U, g(t))$ joining $p_3$ to $p_4$, with initial vector  close to $\gamma_1'(0)$. Since the metric $g(t)$ is induced by the immersion $\pi_{U,l}:U\to F^{-1}(l)$,  and the points $p_3$ and $p_4$ are kept fixed by  $\pi_{U,l}:U\to F^{-1}(l)$, it follows that, likewise, there is a geodesic in $F^{-1}(l(t))$ that joins $p_3$ to $p_4$, and whose tangent vector  can be made as close to $\gamma_1'(0)$ as one wants by taking $t>0$ sufficiently small.

 The preceding discussion applies to  the geodesic $\gamma_2$ as well. Hence, for $t>0$ sufficiently close to zero, there are  geodesics in $F^{-1}(l(t))$ that join $p_3$ to $p_4$, and whose initial vectors   converge  to $\gamma_1'(0)$ and $\gamma_2'(0)$ in $T_{p_3}X$, respectively, as $t\to 0$.

 But  $F^{-1}(l)$  is $1$-connected for $l\neq l_{\alpha}$, and so  there is  only one geodesic in $F^{-1}(l)$ joining $p_3$ and $p_4$.  In particular,  from the above paragraph one must have
 $\gamma_1'(0)=\gamma_2'(0)$, so that the geodesics $\gamma_1$ and $\gamma_2$ are the same.
 This concludes the proof of Lemma \ref{first}.
 \end{proof}

By taking the disc $D^2$ used in the definition of $\Phi$ to be small enough, we can assume that
$D^2 \cap \Sigma =\{{l_\alpha}\}$. Then,  for any $l\in D^2$, $l \neq l_\alpha$, $F^{-1}(l)$ is
$1$-connected.

Hence,  $p_3, p_4 \in F^{-1}(l)$ and there is a unique geodesic
 $\upsilon_l:[0,1]\rightarrow F^{-1}(l)$ such that
$\upsilon_l(0)=p_3, \upsilon_l(1)=p_4$. Let $\Psi$ be the composite map

\begin{eqnarray}
\Psi: \partial \bar{D^2} \rightarrow SL_{|\bar{D^2}}\simeq \bar{D^2} \times S^1 \rightarrow S^1, \;\;\;
l\mapsto \frac{\sigma (l)}{\|\sigma (l)\|} \mapsto \pi (\frac{\sigma (l)}{\|\sigma (l)\|}),
\end{eqnarray}
where $\upsilon_l'(0)=\frac{d}{dt}\upsilon_l(t)|_{t=0}$, $\pi$ is the projection onto the second factor, and
\begin{eqnarray}
\sigma: \partial \bar{D^2} \rightarrow L, \;\;\; l \mapsto dF(\upsilon_l'(0)).
\end{eqnarray}

Notice the similarity  between  the definitions of the maps $\Phi$ and $\Psi$: in the first instance the geodesics join $p_1$ to $p_2$, while in the second they join  $p_3$ to $p_4$.

By Lemma \ref{first}, we can define  $\bar{\sigma}:  \bar{D^2} \rightarrow L$  by
\begin{eqnarray}
  l_\alpha \mapsto dF(\upsilon_{l_\alpha}'(0)), \;\ \text{and}\;\;
l \mapsto dF(\upsilon_l'(0)) \;\;\text{if}\;\; l \neq l_\alpha.
\end{eqnarray}
A simple argument using the uniqueness of $\upsilon_{l_{\alpha}}$ shows  that  $\bar{\sigma}$ is a continuous  extension of $\sigma$. Accordingly,
$\Psi$ extends to  the composite map

\begin{eqnarray}
\bar{\Psi}: \bar{D^2} \rightarrow SL_{|\bar{D^2}} \simeq \bar{D^2} \times S^1 \rightarrow S^1, \;
l\mapsto \frac{\bar{\sigma} (l)}{\|\bar{\sigma} (l)\|} \mapsto \pi (\frac{\bar{\sigma} (l)}{\|\bar{\sigma} (l)\|}).
\end{eqnarray}

\vskip10pt

\begin{lem} \label{metric} Denote by $g$  the metric of $X$. Then, there exists a compact set $K\subset X$ and disjoint neighborhoods $U_{\alpha}$ in $\mathbb C \mathbb P^1$ of  the finitely many exceptional lines $l_{\alpha}$  such that,  for every $\epsilon>0$,  there exists a metric $\hat g$ on $X$ satisfying
\vskip5pt
\noindent i) $\text{supp} \; (g-\hat g)\subset K$,  $\{p_1, p_2, p_3^{\alpha}, p_4^{\alpha}\}\cap K=\emptyset.$
\vskip3pt
\noindent ii) $||g-\hat g||_{C^2}\leq \epsilon. $
\vskip3pt
\noindent iii) For any $l\in U_{\alpha}$, no three points in the set  $\{p_1, p_2, p_3^{\alpha}, p_4^{\alpha}\}$ lie on the same geodesic of
$F^{-1}(l)$, relative to the metric induced from $\hat g$.
\end{lem}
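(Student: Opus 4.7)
The plan is a transversality / perturbation argument, organized around the finite combinatorial structure of the potential collinearities. There are finitely many exceptional lines $l_\alpha$, for each $\alpha$ at most $\binom{4}{3}=4$ triples $T=\{a,b,c\}$ of distinct points drawn from $\{p_1,p_2,p_3^\alpha,p_4^\alpha\}$, and for each such triple at most finitely many candidate $g$-geodesic arcs in the components of $F^{-1}(l_\alpha)$ that join two of the points of $T$ (finitely many, since each connected component is a complete surface of non-positive curvature so geodesic arcs of bounded length between two fixed points form a discrete set, and there are at most $k$ components by hypothesis). In all, only finitely many collinearity configurations must be broken.

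For the set-up, let $\mathcal{P}:=\{p_1,p_2\}\cup\bigcup_\alpha\{p_3^\alpha,p_4^\alpha\}$, a finite subset of $X$. For each configuration (triple $T$, exceptional line $l_\alpha$, candidate arc $\gamma\subset F^{-1}(l_\alpha)$ passing through the third point $c\in T$), I would pick an interior point $x_{T,\alpha,\gamma}\in\gamma\setminus\mathcal{P}$, and let $K$ be the union of small closed balls about these points, each small enough to miss $\mathcal{P}$. I would then choose pairwise disjoint open neighborhoods $U_\alpha\subset\mathbb C \mathbb P^1$ of the $l_\alpha$, small enough that for every configuration the $g$-collinearity locus $\{l:\text{three points of }T\text{ lie on a common }g\text{-geodesic of }F^{-1}(l)\}$ meets $\overline{U_\alpha}$ only at $l_\alpha$ itself.

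Given $\epsilon>0$, for each configuration whose collinearity actually occurs at $l=l_\alpha$ I would construct a smooth symmetric $(0,2)$-tensor $h_{T,\alpha,\gamma}$ supported in the ball about $x_{T,\alpha,\gamma}$, of $C^2$-norm at most $\epsilon/N$ (with $N$ the total number of configurations), and whose first-order effect on the geodesic $\gamma$ shifts it transversely off $c$. Such $h$ exists because the first variation of a fixed-endpoint geodesic under a metric perturbation localized at a single interior point is a nontrivial linear functional of the Christoffel-symbol variation there, and the direction of transverse displacement of the geodesic can therefore be prescribed (Jacobi fields with Dirichlet data, well-posed in the absence of conjugate points, which holds by the non-positive curvature of the fibers). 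Since the supports of the $h_{T,\alpha,\gamma}$ are pairwise disjoint inside $K$, summing yields $h$ with $\|h\|_{C^2}\le\epsilon$, and $\hat g:=g+h$ satisfies (i) and (ii) by construction. At each exceptional line $l_\alpha$ the first-order argument rules out $\hat g$-collinearity; by the openness of non-collinearity and the isolation property built into the choice of $U_\alpha$, (iii) then extends to all of $U_\alpha$.

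The main obstacle is the last step: justifying that the fixed, $\epsilon$-independent neighborhood $U_\alpha$ really contains no residual collinearity for $\hat g$ as $\epsilon\to 0$. This rests on analyzing the local structure of the $g$-collinearity locus near $l_\alpha$: using the implicit-function argument of the proof of Lemma \ref{first} applied to the exponential map along each candidate geodesic, one shows that this locus consists of finitely many smooth branches emanating from $l_\alpha$. Once $U_\alpha$ is chosen small enough so that each branch exits $\overline{U_\alpha}$, the only residual problem is at the single point $l_\alpha$, which the perturbation $h$ handles for every $\epsilon>0$. The delicate bookkeeping is to ensure that the $C^2$-small perturbation does not reintroduce a new branch of collinearity entering $U_\alpha$ from elsewhere; this is controlled by the smallness of $h$ together with the uniform continuity of the fiber-wise exponential map on the compact set $\overline{U_\alpha}$.
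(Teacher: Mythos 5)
Your proposal follows essentially the same route as the paper: both proofs reduce to the finitely many offending collinearity configurations, break each one by an arbitrarily $C^2$-small metric perturbation supported in a small ball centered at an interior point of the offending geodesic chosen away from $\{p_1,p_2,p_3^{\alpha},p_4^{\alpha}\}$, take $K$ to be the union of these disjoint balls, and obtain the neighborhoods $U_{\alpha}$ from the continuous dependence of geodesics on the line $l$. The only difference is in the deflection step, which the paper formulates as condition $(\dagger)$ via the exit-direction map $\nu$ on $\partial A$ and explicitly leaves as an omitted detail, while you supply it with a first-variation/Jacobi-field argument; this is a legitimate way to fill in exactly that detail.
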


\begin{proof} Since geodesics converge to geodesics,  once  iii) is established for the exceptional lines $l_{\alpha}$ themselves,  the existence of $U_{\alpha}$ follows.

Suppose, for instance, that  for some $l_{\alpha}$ fixed,  the oriented geodesic on $F^{-1}(l_{\alpha})$ joining $p_1$ and $p_2$ also contains $p_3=p_3^{\alpha}$, and $p_3>p_2$ (in the obvious sense).

Let $q$ be a point in the open geodesic segment $(p_2, p_3)$ that does not belong to the (discrete) set $F^{-1}(0)$.  Choose   a small geodesic ball
$B_{\epsilon}(q)$ in the ambient manifold $X$, and set  $A=B_{\epsilon}(q)\cap F^{-1}(l_{\alpha})$.

For $p\in \partial A$,  denote by  $\nu(p)$ the set of unit tangent vectors  at $p$ corresponding to the geodesics in $F^{-1}(l_{\alpha})$ joining $p$ to $p_3$.  Since the curvature is negative,  it follows from the arguments in Lemma \ref{first} that  the set $\nu(p)$  actually consists of a single point  if $\epsilon$ is small enough, and the map $\nu:A\to T\partial A, \;\;\; p \mapsto \nu(p)$,  is continuous.

It is now clear that there is  a metric $\tilde g$  on $F^{-1}(l_{\alpha})$   for which  $\tilde g-g|F^{-1}(l_{\alpha})$ is supported inside $A$,
$||\tilde g-g|F^{-1}(l_{\alpha})||_{C^2}$ is as small as desired and, furthermore, the following  holds:

\vskip8pt
\noindent ($\dagger$)  The   $\tilde g$-geodesic whose initial point is the first point in $\partial A$ of  the continuation of the $g$-geodesic $[p_1, p_2]$ hits the boundary $\partial A$ again at some  point  $p$ with a tangent vector that is not a multiple of    $\nu(p)$.
\vskip8pt
We omit the details that ($\dagger$)  can always be achieved, but point out an additional simplification. By working with geodesic polar coordinates one can first make a small compactly supported perturbation of the  original metric  in $F^{-1}(l_{\alpha})$ near $q$,   and assume in ($\dagger$) that the curvature is a negative constant in a neighborhood of $q$.

To continue with the proof, it  follows from  ($\dagger$)  and the definition of $\nu$  that the $\tilde g$-geodesic that passes through $p_1$ and $p_2$ does not contain $p_3$. Since $\tilde g$ is an arbitrarily small perturbation of   the metric induced by $g$,  and  these metrics coincide off $B_{\epsilon}(q)$,  it is possible to arrange matters so that there is a small perturbation $\hat g$ of the ambient metric $g$ that  coincides  with $g$ off  $B_{\epsilon}(q)$ and  induces $\tilde g$ on
$F^{-1}(l_{\alpha})$.

Since there are only finitely many exceptional lines $l_{\alpha}$,  and finitely many possibilities for three of the points  in $\{p_1, p_2, p_3^{\alpha}, p_4^{\alpha}\}$ to lie on the same geodesics of $F^{-1}(l_{\alpha})$, one can choose the points $q$ in the discussion above to be pairwise distinct for the various choices of $l_{\alpha}$,  and $\epsilon$ to be small enough,  so that the corresponding balls $B_{\epsilon}(q)$ are  disjoint.

Hence, by making a finite number of arbitrarily small perturbations of $g$, all supported on arbitrarily small disjoint sets  away from  $F^{-1}(0)$,  we can construct a metric $\hat g$ as in the statement of the lemma by taking $K$ to be the union of the various closed disjoint open balls $\overline {B_{\epsilon}}$.

Notice that $\hat g$ is not K$\ddot{\text a}$hler, but by taking $\epsilon$ small enough the metric induced by $\hat g$ on any $F^{-1}(l)$ has negative curvature.
\end{proof}
\vskip10pt

\begin{prop} \label{homotopy}
 The maps $\Phi$  and  $\Psi$ are homotopic.
\end{prop}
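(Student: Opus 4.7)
The plan is to construct a homotopy between the sections $s$ and $\sigma$, realized as a continuous family of nowhere-vanishing sections of $L$ over $\partial\bar D^2$, by simultaneously sliding the endpoints of the defining geodesics; after normalization and projection to $S^1$ this will induce the desired homotopy between $\Phi$ and $\Psi$.

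First I would replace the K\"ahler metric $g$ with the perturbation $\hat g$ supplied by Lemma \ref{metric}, and shrink $D^2$ so that $\bar D^2\subset U_\alpha$. Since geodesics with prescribed endpoints depend $C^1$-continuously on $C^2$-small perturbations of the metric, the sections built from $g$-geodesics are joined to their $\hat g$-counterparts by small, nowhere-vanishing deformations; one may therefore work with $\hat g$ throughout, using that no three of the points $p_1, p_2, p_3^\alpha, p_4^\alpha$ lie on the same geodesic of any $F^{-1}(l)$, $l\in\bar D^2$.

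Each $F^{-1}(l)$ for $l\in\partial\bar D^2$ is a $1$-connected, complete, negatively curved surface; by Cartan-Hadamard the geodesic joining any two of its distinct points is unique and depends continuously on the endpoints and on $l$, by the same argument as in Lemma \ref{first}. Let $\alpha_l:[0,1]\to F^{-1}(l)$ denote the unique geodesic from $p_1$ to $p_3$, and $\beta_l:[0,1]\to F^{-1}(l)$ the unique geodesic from $p_2$ to $p_4$. I would then define $H:[0,1]\times\partial\bar D^2\to L$ in two stages: for $t\in[0,\tfrac12]$, $H(t,l)=dF(\mu_{l,t}'(0))$, where $\mu_{l,t}$ is the unique geodesic in $F^{-1}(l)$ from $\alpha_l(2t)$ to $p_2$; for $t\in[\tfrac12,1]$, $H(t,l)=dF(\nu_{l,t}'(0))$, where $\nu_{l,t}$ is the unique geodesic from $p_3$ to $\beta_l(2t-1)$. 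At $t=0$ this produces $s(l)$; at $t=\tfrac12$ both prescriptions yield the geodesic from $p_3$ to $p_2$ and hence agree; at $t=1$ it produces $\sigma(l)$.

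The crux of the argument is to verify that $H$ never meets the zero section. Stage I requires $\alpha_l(2t)\neq p_2$, which would otherwise force $p_2$ onto the geodesic joining $p_1$ and $p_3$; Stage II requires $\beta_l(2t-1)\neq p_3$, which would otherwise force $p_3$ onto the geodesic joining $p_2$ and $p_4$. Both are precisely the configurations excluded by Lemma \ref{metric}(iii). Degenerate cases in which some of the $p_i$ coincide need only minor adjustment: if $\{p_3,p_4\}=\{p_1,p_2\}$ one may choose the ordering so that $\sigma=s$ and nothing is to be proved, while the only genuinely awkward single coincidence $p_2=p_3$ is handled by routing the two-stage path through the opposite corner of the endpoint square (move $p_2\to p_4$ first, then $p_1\to p_3$), where Lemma \ref{metric}(iii), applied to the three remaining distinct labels, again rules out degeneracies.
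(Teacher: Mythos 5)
Your construction is correct, and in the main case it takes a genuinely different route from the paper. The paper splits the proof into three cases according to which of the coincidences $p_1=p_3$, $p_2=p_4$ hold; in the generic Case 3 it deforms $\omega_l$ into $\upsilon_l$ through the ``square'' family $H_l(s,\cdot)$ of geodesics joining $\alpha_l(s)$ to $\beta_l(s)$, and the non-degeneracy of that family requires showing that $\alpha_l$ and $\beta_l$ are disjoint for every $l\in\partial \bar{D^2}$. That disjointness is the content of Lemma \ref{closed}, whose proof needs a possible relabeling of $p_3,p_4$, the convexity of the components of the complement of a geodesic in a Cartan--Hadamard surface (Lemma \ref{convex}), and a first-contact argument as $l$ runs around $\partial \bar{D^2}$. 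Your ``L-shaped'' homotopy through the corner $(p_3,p_2)$ of the endpoint square --- first slide $p_1$ to $p_3$ along $\alpha_l$ keeping the far endpoint at $p_2$, then slide $p_2$ to $p_4$ along $\beta_l$ keeping the near endpoint at $p_3$ --- only ever needs one of the four points to stay off the geodesic joining two of the others, which is exactly Lemma \ref{metric}(iii); it therefore bypasses Lemmas \ref{closed} and \ref{convex} entirely and absorbs the paper's Cases 1 and 2 as the degenerate instances you describe. The price is the switch to the opposite corner $(p_1,p_4)$ when $p_2=p_3$; since both corners degenerate simultaneously only when $\{p_3,p_4\}=\{p_1,p_2\}$, which you dispose of by reordering, the argument closes. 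Your preliminary points --- that the degree of $\Phi$ may be computed with the perturbed metric $\hat g$ because the two sections are joined by a small nowhere-vanishing deformation, and that initial vectors of connecting geodesics depend continuously on $l$ and on the endpoints --- are the same continuity facts the paper uses implicitly, so nothing essential is missing.
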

\noindent The proof   of this proposition is divided into three cases which need to be examined separately.
\vskip5pt
\noindent \bf Case $1$\rm: $p_1=p_3$ and $p_2=p_4$.
\\
This is the trivial alternative, since  $\Phi=\Psi$.
\vskip5pt
\noindent \bf Case $2$\rm: $p_1=p_3$ and  $p_2\neq p_4$,  or $p_2=p_4$ and  $p_1\neq p_3$.
\vskip5pt
\noindent
We shall assume that $p_1=p_3$, $p_2\neq p_4$. The proof is similar if
$p_2=p_4$, $p_1\neq p_3$. Let $\rho_l:[0,1]\rightarrow F^{-1}(l)$ be the unique (non-constant) geodesic such that
$\rho_l(0)=p_2, \rho_l(1)=p_4$, as depicted in  figure $1$.

\begin{center}
\begin{pspicture}(0,-3.028125)(5.2428126,3.028125)
\psline[linewidth=0.04cm](0.9009375,2.5296874)(0.9209375,-1.4103125)
\psline[linewidth=0.04cm](0.9009375,2.5496874)(4.8209376,2.5496874)
\psline[linewidth=0.04cm](4.8209376,2.5496874)(0.9209375,-1.3703125)
\usefont{T1}{ptm}{m}{n}
\rput(1.0023438,-1.6603125){$p_1=p_3$}
\usefont{T1}{ptm}{m}{n}
\rput(1.9259375,-2.8003125){Figure 1}
\usefont{T1}{ptm}{m}{n}
\rput(0.6923438,2.8396876){$p_2$}
\usefont{T1}{ptm}{m}{n}
\rput(4.7523437,2.7996874){$p_4$}
\usefont{T1}{ptm}{m}{n}
\rput(0.40234375,0.8196875){$w_l$}
\usefont{T1}{ptm}{m}{n}
\rput(3.2223437,0.4196875){$v_l$}
\usefont{T1}{ptm}{m}{n}
\rput(2.5423439,2.8396876){$\rho_l$}
\usefont{T1}{ptm}{m}{n}
\rput(1.9323437,1.6196876){$T_l$}
\psline[linewidth=0.04cm](1.5809375,2.5496874)(0.9409375,-1.3103125)
\psline[linewidth=0.04cm](2.8609376,2.5296874)(0.9609375,-1.2903125)
\end{pspicture}
\end{center}

 By Lemma \ref{metric}, we can choose $D^2$ in  the definition of $\Psi$ small enough so  that
$p_1$ does not lie in $\rho_l$ for any $l\in \bar{D^2}$.
Let $T_l:[0,1] \times [0,1] \rightarrow F^{-1}(l)$ be given by the  family of geodesics $T_l(s,  .)$ such that $T_l(s,0)=p_1, T_l(s,1)=\rho_l(s)$, where $l\in \partial\bar{D^2}$.

Recall that  $\omega_l:[0,1]\rightarrow F^{-1}(l)$ is the unique geodesic such that $\omega_l(0)=p_1, \omega _l(1)=p_2$,
and  $\upsilon_l:[0,1]\rightarrow F^{-1}(l)$ is the unique geodesic such that $\upsilon_l(0)=p_3, \upsilon _l(1)=p_4.$
Thus,  $s \mapsto T_l(s, .)$ is a homotopy between  $\omega_l$ and $\upsilon_l$.
Since $p_1$ does not lie in $\rho_l$, by Lemma \ref{metric}, we see that
$$T_l'(s,0)=\frac{\partial}{\partial t}T_l(s,t)|_{t=0}\neq 0$$
for any $l\in \partial \bar{D^2}$ and $s\in [0,1]$.
Let $\Theta$ be the composite map
\begin{eqnarray}
\partial \bar{D^2} \times [0,1] \rightarrow SL_{|\bar{D^2}}\simeq \bar{D^2} \times S^1 \rightarrow S^1, \;\;\;
(l, s) \mapsto \frac{\tau(l,s)}{\|\tau (l,s))\|} \mapsto \pi (\frac{\tau(l,s)}{\|\tau (l,s))\|}),
\end{eqnarray}
where
\begin{eqnarray}
\tau: \partial \bar{D^2} \times [0,1] \rightarrow L, \; \;\;(l,s) \mapsto dF(T_l'(s,0)).
\end{eqnarray}
It is now easy to see that the map  $\Theta$ is a homotopy between $\Phi$ and $\Psi$.
\vskip5pt

\noindent \bf Case $3$\rm: $p_1 \neq p_3$ and $p_2\neq p_4$.
\vskip5pt

\noindent By Lemma \ref{metric}, we can choose $D^2$ small enough such that for any $l\in \partial \bar{D^2}$,
no three points of $p_1, p_2, p_3, p_4$ lie in the same geodesic in $F^{-1}(l)$.
Fix $l\in \partial \bar{D^2}$ and  let $\alpha_l(s):[0,1]\rightarrow F^{-1}(l)$  be the unique geodesic such that $\alpha_l(0)=p_1, \alpha_l(1)=p_3$. Likewise, let  $\beta_l(s):[0,1]\rightarrow F^{-1}(l)$ be the unique geodesic such that $\beta_l(0)=p_2, \beta_l(1)=p_4.$

Let $H_l:[0,1] \times [0,1] \rightarrow F^{-1}(l)$ be given by the  family of geodesics joining  $H_l(s,0)=\alpha_l(s)$ to  $H_l(s,1)=\beta_l(s)$.
Notice that,  for each $s\in [0,1]$, $H_l(s,\cdot)$ is uniquely determined by $\alpha_l(s)$ and $\beta_l(s)$, since $F^{-1}(l)$ is a Cartan-Hadamard surface.
Then $H_l$ induces  a homotopy between $\omega_l$ and $\upsilon_l$ (see figure $2$).

\begin{center}
\begin{pspicture}(0,-2.718125)(7.1428127,2.718125)
\psframe[linewidth=0.04,dimen=outer](6.2409377,2.2596874)(0.9209375,-1.0803125)
\psline[linewidth=0.04cm](2.5009375,2.1796875)(2.5009375,-1.0603125)
\psline[linewidth=0.04cm](4.4209375,2.2396874)(4.4409375,-1.1003125)
\usefont{T1}{ptm}{m}{n}
\rput(3.3204687,-2.4903126){Figure 2}
\usefont{T1}{ptm}{m}{n}
\rput(0.7523438,-1.3903126){$p_1$}
\usefont{T1}{ptm}{m}{n}
\rput(0.85234374,2.4896874){$p_2$}
\usefont{T1}{ptm}{m}{n}
\rput(6.3123436,-1.3303125){$p_3$}
\usefont{T1}{ptm}{m}{n}
\rput(6.2523437,2.5296874){$p_4$}
\usefont{T1}{ptm}{m}{n}
\rput(0.40234375,0.8096875){$w_l$}
\usefont{T1}{ptm}{m}{n}
\rput(6.702344,0.8096875){$v_l$}
\usefont{T1}{ptm}{m}{n}
\rput(3.3223438,-1.4503125){$\alpha_l$}
\usefont{T1}{ptm}{m}{n}
\rput(3.4623437,2.5296874){$\beta_l$}
\usefont{T1}{ptm}{m}{n}
\rput(2.9023438,0.7296875){$H_l$}
\end{pspicture}
\end{center}

\begin{lem} \label{closed}
After labeling  $p_3$, $p_4$ suitably,  $\alpha_l$ does not intersect $\beta_l$ for any $l\in \partial \bar{D^2}$.  In particular, for any $l\in \partial \bar{D^2}$ and $s\in [0,1]$,
$
H_l'(s,0)=\frac{\partial}{\partial t}H_l(s,t)|_{t=0}\neq 0.
$

\end{lem}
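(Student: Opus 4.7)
My plan splits the argument in two parts: a single-line planarity argument to select the labeling, and a clopen/connectedness argument on $\partial \bar{D^2}$ to propagate it. Fix any $l_0 \in \partial \bar{D^2}$. Then $F^{-1}(l_0)$ is a Cartan-Hadamard surface (relative to the perturbed metric $\hat g$ of Lemma \ref{metric}), hence diffeomorphic to $\mathbb{R}^2$; moreover, two distinct geodesic segments in it meet in at most one point, and transversally, since matching initial data would force coincidence. Combined with item iii) of Lemma \ref{metric}, which forbids three of $p_1, p_2, p_3, p_4$ from lying on a common geodesic of $F^{-1}(l_0)$, a standard planar argument -- based on the fact that the three maximal geodesics through pairs of $\{p_1, p_2, p_3\}$ divide $F^{-1}(l_0) \simeq \mathbb{R}^2$ into seven regions, followed by a case analysis according to which region contains $p_4$ -- shows that at most one of the three partitions of $\{p_1, p_2, p_3, p_4\}$ into two pairs produces crossing geodesic segments. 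Since the partitions $\{\{p_1, p_3\}, \{p_2, p_4\}\}$ and $\{\{p_1, p_4\}, \{p_2, p_3\}\}$ exhaust the two labelings of $p_3, p_4$, after relabeling if necessary I may assume $\alpha_{l_0} \cap \beta_{l_0} = \emptyset$.

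With this labeling fixed, I set $C = \{l \in \partial \bar{D^2} : \alpha_l \cap \beta_l \neq \emptyset\}$ and show $C$ is clopen in $\partial \bar{D^2}$; since $l_0 \notin C$ and the circle is connected, this forces $C = \emptyset$. For openness of $C$: at any $l \in C$, Lemma \ref{metric} rules out both the coincidence $\alpha_l = \beta_l$ (else all four points lie on one geodesic) and any endpoint intersection (for instance, $p_1 \in \beta_l$ would place $p_1, p_2, p_4$ on one geodesic); so the intersection is a single transversal interior crossing, which persists under $C^1$-small perturbations of $\alpha_l, \beta_l$. For openness of the complement $N = \partial \bar{D^2} \setminus C$: two disjoint compact arcs stay disjoint under small $C^0$-perturbations. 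In both cases the perturbation arises from varying $l$, and the continuous dependence of the geodesics $\alpha_l, \beta_l$ on $l$ is supplied by the same mechanism already used in Lemma \ref{first}.

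The ``in particular'' assertion is then immediate: if $H_l'(s, 0) = 0$, the geodesic $t \mapsto H_l(s, t)$ would be constant at $\alpha_l(s)$, forcing $\alpha_l(s) = H_l(s, 1) = \beta_l(s)$ and contradicting $\alpha_l \cap \beta_l = \emptyset$. I expect the most delicate step to be the planarity argument at $l_0$, namely verifying that the Euclidean dichotomy ``zero or one crossing pairing'' transfers faithfully to geodesic segments in $F^{-1}(l_0)$. This reduces to a short topological case analysis in $F^{-1}(l_0) \simeq \mathbb{R}^2$, using only the two-point uniqueness and transversality of Cartan-Hadamard geodesics established above, together with the non-collinearity supplied by Lemma \ref{metric}.
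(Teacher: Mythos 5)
Your proposal is correct and follows essentially the same route as the paper: fix the labeling at a single line $l_0$ using the separation/convexity properties of complete geodesics in the Cartan--Hadamard surface, then propagate disjointness around the connected circle $\partial \bar{D^2}$ by noting that any transition would force a tangency or an endpoint intersection, each of which places three of $p_1,p_2,p_3,p_4$ on a common geodesic, contradicting Lemma \ref{metric}. The paper packages the propagation as a ``first contact'' argument rather than a clopen argument, and obtains the initial labeling directly from Lemma \ref{convex} (after relabeling, the two geodesics lie in different components of the complement of $\alpha_{l_0}$) rather than via your seven-region planar case analysis, but these are presentational differences only.
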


We need   the following  well-known convexity result  in the geometry of surfaces of non-positive curvature.  For completeness, we include its proof.
\begin{lem} \label{convex}
Let $N^2$ be a Cartan-Hadamard surface and $\delta:(-\infty, +\infty )\rightarrow N^2$  a geodesic in $N^2$. Then each component of $N^2-\delta$ is convex.
\end{lem}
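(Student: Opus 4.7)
The plan is a contradiction argument built on the two defining properties of a Cartan--Hadamard surface: the exponential map at any point is a diffeomorphism, so $N^{2}$ is diffeomorphic to $\mathbb{R}^{2}$, and any two points are joined by a \emph{unique} geodesic. Since the complete geodesic $\delta$ is a properly embedded copy of $\mathbb{R}$ inside $N^{2}\cong\mathbb{R}^{2}$, standard planar topology gives that $N^{2}-\delta$ has exactly two connected components. Fix one such component $C$ and pick $p,q\in C$; let $\gamma:[0,1]\to N^{2}$ be the unique geodesic with $\gamma(0)=p$, $\gamma(1)=q$. Convexity of $C$ amounts to showing that the closed set $I:=\{t\in[0,1]:\gamma(t)\in\delta\}$, which is a priori contained in $(0,1)$, is actually empty.

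I would split into two cases according to $|I|$. If $I$ contains two distinct points $t_{1}<t_{2}$, then $\gamma|_{[t_{1},t_{2}]}$ and the sub-arc of $\delta$ running between $\gamma(t_{1})$ and $\gamma(t_{2})$ are both geodesics of $N^{2}$ joining the same pair of points; by uniqueness they coincide, and extending both maximally forces $\gamma=\delta$ as complete geodesics, contradicting $p\notin\delta$. If instead $I=\{t_{0}\}$ with $r=\gamma(t_{0})$, then either $\gamma'(t_{0})$ is parallel to $\delta'$ at $r$ --- in which case $\gamma$ and $\delta$ share initial data at $r$ and again coincide --- or the two curves meet transversally at $r$. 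The transverse alternative is impossible because, with $I=\{t_{0}\}$, a single transverse crossing forces $p=\gamma(0)$ and $q=\gamma(1)$ into different components of $N^{2}-\delta$, contradicting $p,q\in C$.

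The only step not immediately implied by the uniqueness of geodesics is the last one: that a single transverse crossing swaps the side of $\delta$. I expect this to be the main (minor) obstacle, and it is handled via the $\mathbb{R}^{2}$ model from Cartan--Hadamard, by computing the mod-$2$ intersection number of $\gamma$ with the properly embedded line $\delta$ and noting that it detects the connecting class in $H_{0}(N^{2}-\delta;\mathbb{Z}/2)\cong\mathbb{Z}/2$. Apart from this topological remark, the entire lemma reduces to the fact that in a simply-connected surface of non-positive curvature, any two geodesics that share two distinct points must coincide.
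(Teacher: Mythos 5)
Your proof is correct, but it takes a genuinely different route from the paper's. The paper argues dynamically: it joins $x_1$ to $x_2$ by an auxiliary path $\zeta$ inside the component (so $\zeta$ avoids $\delta$), considers the continuous family of geodesics $\gamma_t$ from $\zeta(t)$ to $x_2$, and takes the supremum $t_0$ of the parameters for which $\gamma_t$ still meets $\delta$; at $t_0$ the geodesic $\gamma_{t_0}$ must be tangent to $\delta$ (a transverse intersection would persist for $t>t_0$), hence coincides with $\delta$ by uniqueness of geodesics, forcing $x_2\in\delta$, a contradiction. Your argument is static: you examine the intersection set of the single geodesic $\gamma$ with $\delta$ and dispose of each possibility --- two intersection points, or one tangential intersection, are excluded by uniqueness of geodesics (the same mechanism as the paper's tangency step), while a single transverse crossing is excluded by the mod-$2$ intersection count, which places the endpoints on opposite sides of the properly embedded line $\delta$. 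The trade-off is that you replace the paper's reliance on continuous dependence of geodesics on their endpoints and on the sup/compactness argument by an explicit topological input, namely that $\delta$ separates $N^2$ and that one transverse crossing switches components. That input is legitimate, and in this setting it can be made completely elementary: working in normal coordinates $\exp_{x_0}$ at a point $x_0\in\delta$, the geodesics through $x_0$ correspond to straight lines through the origin, so $\exp_{x_0}^{-1}(\delta)$ is an honest line in $T_{x_0}N^2\cong\mathbb{R}^2$, and both the two-component separation and the crossing-parity statement become immediate by evaluating the linear functional that vanishes on that line.
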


\begin{proof}
Fix any two points $x_1,x_2$ lying in the same component $U$ of $N^2-\delta$.
Let $\gamma:[0,1]\rightarrow N^2$ be the unique geodesic such that $\gamma(0)=x_1$ and $\gamma(1)=x_2$, as shown in figure $3$.

\begin{center}
\begin{pspicture}(0,-1.7170312)(7.2228127,2.7392187)
\psline[linewidth=0.04cm](0.0609375,2.2207813)(6.9209375,2.3007812)
\usefont{T1}{ptm}{m}{n}
\rput(6.742344,-0.30921876){$x_1$}
\usefont{T1}{ptm}{m}{n}
\rput(0.7423437,-0.24921875){$x_2$}
\usefont{T1}{ptm}{m}{n}
\rput(3.4929688,-1.4892187){Figure 3}
\usefont{T1}{ptm}{m}{n}
\rput(0.448125,1.2507813){U}
\usefont{T1}{ptm}{m}{n}
\rput(0.53234375,2.5507812){$\delta$}
\usefont{T1}{ptm}{m}{n}
\rput(4.2123437,1.5107813){$\gamma_{t_0}$}
\usefont{T1}{ptm}{m}{n}
\rput(3.3823438,-0.36921874){$\zeta$}
\usefont{T1}{ptm}{m}{n}
\rput(6.2123437,1.6507813){$\gamma$}
\psarc[linewidth=0.04](3.7109375,-0.04921875){2.65}{0.0}{180.0}
\psline[linewidth=0.04cm](1.1009375,-0.01921875)(6.3609376,0.0)
\psarc[linewidth=0.04](3.3809376,-0.03921875){2.28}{0.0}{180.0}
\usefont{T1}{ptm}{m}{n}
\rput(5.552344,-0.30921876){$\zeta(t_0)$}
\end{pspicture}
\end{center}

Assume, by contradiction, that  $\gamma$  intersects $\delta$. Take any  continuous curve $\zeta:[0,1]\rightarrow N^2$
such that $\zeta(0)=x_1, \zeta(1)=x_2$,  and $\zeta$ does \it not \rm intersect  $\delta$.
Consider
$$t_0=\sup\{{t|\gamma_t \cap \delta} \neq \emptyset , t\in [0,1]\},$$
 where $\gamma_t: [0,1]\rightarrow N^2$ is the unique geodesic such that
$\gamma_t(0)= \zeta(t)$ and $\gamma_t(1)=x_2$ for any $t \in [0,1]$. Notice that $\gamma_0=\gamma$.
Since $\zeta$ does not intersect  $\delta$, we see that $t_0<1$.
Then $\gamma_{t_0}$ must be tangent to $\delta$, otherwise  $\gamma_t$ will also intersect  $\delta$
for some $t>t_0$, which contradicts  the definition of $t_0$. Hence $\gamma_{t_0}$ must be tangent to $\delta$, and so these curves must coincide as they are geodesics.  Since $x_2 \in \gamma_{t_0}$,  we  have  $x_2 \in \delta$,  a contradiction.
\end{proof}

Now,  we are going to prove Lemma \ref{closed}.
We start by fixing $l_0 \in \partial \bar{D^2}$. We first show that,  by relabelling $p_3$ and  $p_4$ if necessary,  $\alpha_{l_0}$ does not intersect $\beta_{l_0}.$
Suppose $\alpha_{l_0} \cap  \beta_{l_0} \neq \emptyset$.
By Lemma \ref{metric}, $p_2, p_4$ do not lie in $\alpha_{l_0}$. Since $\alpha_{l_0} \cap  \beta_{l_0} \neq \emptyset$, Lemma \ref{convex}
implies that $p_2, p_4$ must lie in different components of $N^2-\alpha_{l_0}$ (see figure $4$).

\begin{center}
\begin{pspicture}(0,-3.758125)(10.642813,3.758125)
\psline[linewidth=0.04cm](1.9209375,3.0796876)(8.380938,-2.1803124)
\usefont{T1}{ptm}{m}{n}
\rput(4.9617186,-3.5303125){Figure 4}
\usefont{T1}{ptm}{m}{n}
\rput(2.1923437,-2.8703125){$p_1$}
\usefont{T1}{ptm}{m}{n}
\rput(8.3923435,-2.4703126){$p_2$}
\usefont{T1}{ptm}{m}{n}
\rput(8.9323435,3.5696876){$p_3=\hat{p_4}$}
\usefont{T1}{ptm}{m}{n}
\rput(2.3523438,3.3896875){$p_4=\hat{p_3}$}
\usefont{T1}{ptm}{m}{n}
\rput(5.9823437,1.8096875){$\alpha_{l_0}$}
\usefont{T1}{ptm}{m}{n}
\rput(5.702344,-0.4903125){$\beta_{l_0}$}
\usefont{T1}{ptm}{m}{n}
\rput(1.5623437,0.1496875){$\hat {\alpha_{l_0}}$}
\usefont{T1}{ptm}{m}{n}
\rput(9.082344,0.6896875){$\hat {\beta_{l_0}}$}
\psline[linewidth=0.04cm](1.9409375,3.0796876)(2.2009375,-2.5203125)
\psline[linewidth=0.04cm](2.2209375,-2.5003126)(8.260938,3.3196876)
\psline[linewidth=0.04cm](8.260938,3.2796874)(8.400937,-2.1803124)
\end{pspicture}
\end{center}

In this case,  we  interchange the labeling of $p_3$ and $p_4$. In other words, set  $\hat{p_1}=p_1$, $\hat{p_2}=p_2$,  $\hat{p_3}=p_4$, $\hat{p_4}=p_3$ (figure 4). Also let $\hat {\alpha_{l_0}}:[0,1]\rightarrow F^{-1}(l)$ be the unique geodesic such that
$\hat {\alpha_{l_0}}(0)=p_1$, $\hat {\alpha_{l_0}}(1)=\hat{p_3}$ and $\hat \beta_{l_0}:[0,1]\rightarrow F^{-1}(l)$ be the unique geodesic
such that $\hat {\beta_{l_0}}(0)=p_2$, $\hat {\beta_{l_0}}(1)=\hat{p_4}$.
By Lemma \ref{convex}, $\hat {\alpha_{l_0}}$ and $\hat {\beta_{l_0}}$ lie in
different components of $N^2-\alpha_{l_0}$,  and hence do not intersect  each other.

\par  As we shall see below, the crucial point of the proof  that $\alpha_l$ does not intersect $\beta_l$ for \it any \rm $l\in \partial \bar{D^2}$ is the fact that,  as it was arranged above, after a possible relabeling of $p_3$ and $p_4$,  the conclusion holds for a single line $l_0 \in \partial \bar{D^2}$. We now proceed to show that, in fact, $\alpha_l\cap\beta_l=\emptyset$ for all $l\in\partial \bar{D^2}$.

By continuity,
$\alpha_l$ does not intersect  $\beta_l$ if $l$ is  sufficiently close  to $l_0$. Suppose  that, for some $l$,
$\alpha_l$ does indeed intersect  $\beta_l$. We are going to derive a contradiction.

Let $\ell$ be the complex line such that
$\alpha_\ell$ intersects  $\beta_\ell$  for the  the  ``first time", as a variable line in $\partial \bar{D^2}$ that started at $l_0$ moves towards $\ell$ (say, in the positive sense in $\partial \bar{D^2}$).

Considering  what happens when the first contact occurs, one sees that  one of the possibilities below must occur (see figures $5$, $6$ and $7$):
\vskip7pt
\noindent i)  $\alpha_{\ell}$ is tangent to $\beta_{\ell}$
at some interior point of $\beta_{\ell}$.
\vskip7pt
\noindent ii) $\alpha_ \ell \cap \beta_{\ell}=\{\beta_{\ell}(0)\}=\{p_2\}$.
\vskip7pt
\noindent iii)  $\alpha_{\ell} \cap \beta_{\ell}= \{\beta_{\ell}(1)\}=\{p_4\}$.

\begin{center}
\begin{pspicture}(0,-1.3570312)(9.042812,3.6192188)
\psarc[linewidth=0.04](4.4709377,-0.32921875){3.25}{0.0}{180.0}
\psline[linewidth=0.04cm](1.1809375,2.8407812)(7.6009374,3.0407813)
\psline[linewidth=0.04cm](1.2009375,2.8607812)(1.2209375,-0.21921875)
\psline[linewidth=0.04cm](7.6009374,3.0207813)(7.7209377,-0.07921875)
\usefont{T1}{ptm}{m}{n}
\rput(1.3523438,-0.56921875){$p_1$}
\usefont{T1}{ptm}{m}{n}
\rput(7.952344,-0.5492188){$p_3$}
\usefont{T1}{ptm}{m}{n}
\rput(1.1923437,3.2307813){$p_2$}
\usefont{T1}{ptm}{m}{n}
\rput(7.592344,3.4307814){$p_4$}
\usefont{T1}{ptm}{m}{n}
\rput(4.5223436,3.2707813){$\beta_l$}
\usefont{T1}{ptm}{m}{n}
\rput(6.162344,2.1707811){$\alpha_l$}
\usefont{T1}{ptm}{m}{n}
\rput(0.79234374,1.8107812){$\omega_l$}
\usefont{T1}{ptm}{m}{n}
\rput(8.132343,1.7707813){$\upsilon_l$}
\usefont{T1}{ptm}{m}{n}
\rput(4.1960936,-1.1292187){Figure 5}
\end{pspicture}
\end{center}

\begin{center}
\begin{pspicture}(0,-2.938125)(8.302813,2.938125)
\usefont{T1}{ptm}{m}{n}
\rput(1.2923437,-1.6903125){$p_1$}
\usefont{T1}{ptm}{m}{n}
\rput(7.8123436,-1.6903125){$p_3$}
\usefont{T1}{ptm}{m}{n}
\rput(1.4723438,2.7496874){$p_2$}
\usefont{T1}{ptm}{m}{n}
\rput(7.632344,2.7496874){$p_4$}
\usefont{T1}{ptm}{m}{n}
\rput(4.1023436,2.7096875){$\beta_l$}
\usefont{T1}{ptm}{m}{n}
\rput(0.70234376,1.2896875){$\alpha_l$}
\usefont{T1}{ptm}{m}{n}
\rput(2.2723436,0.2496875){$\omega_l$}
\usefont{T1}{ptm}{m}{n}
\rput(4.981094,-2.7103126){Figure 6}
\psline[linewidth=0.04cm](1.3809375,2.3396876)(7.8209376,2.3596876)
\psline[linewidth=0.04cm](1.4009376,2.2996874)(7.7609377,-1.4003125)
\psellipse[linewidth=0.04,dimen=outer](1.3809375,0.4796875)(0.46,1.84)
\usefont{T1}{ptm}{m}{n}
\rput(5.4823437,0.1896875){$\alpha_l$}
\end{pspicture}
\end{center}

\begin{center}
\begin{pspicture}(0,-2.668125)(8.802813,2.668125)
\usefont{T1}{ptm}{m}{n}
\rput(0.45234376,-1.6603125){$p_1$}
\usefont{T1}{ptm}{m}{n}
\rput(7.2923436,-1.6603125){$p_3$}
\usefont{T1}{ptm}{m}{n}
\rput(0.41234374,2.3396876){$p_2$}
\usefont{T1}{ptm}{m}{n}
\rput(6.972344,2.4796875){$p_4$}
\usefont{T1}{ptm}{m}{n}
\rput(3.7623436,2.3596876){$\beta_l$}
\usefont{T1}{ptm}{m}{n}
\rput(3.1023438,0.2996875){$\alpha_l$}
\usefont{T1}{ptm}{m}{n}
\rput(6.1523438,0.4196875){$\upsilon_l$}
\usefont{T1}{ptm}{m}{n}
\rput(3.72,-2.4403124){Figure 7}
\psline[linewidth=0.04cm](0.8009375,-1.1903125)(7.0009375,2.1096876)
\psline[linewidth=0.04cm](0.2209375,1.9496875)(7.0409374,2.1296875)
\psellipse[linewidth=0.04,dimen=outer](7.0009375,0.4096875)(0.6,1.7)
\usefont{T1}{ptm}{m}{n}
\rput(8.022344,0.8796875){$\alpha_l$}
\end{pspicture}
\end{center}

In alternative i),  $\alpha_{\ell}$ is tangent to $\beta_{\ell}$,  and so these curves  must coincide since they  are geodesics.
Then $p_1, p_2, p_3, p_4$ lie in the same geodesic in $F^{-1}(\ell)$, contradicting  Lemma \ref{metric}.

If  ii) holds,   $\alpha_{\ell} \cap \beta_{\ell}=\beta_{\ell}(0)=p_2$,  and so
$\alpha_{\ell}$ passes through both $p_1$ and $p_2$.  Recall that $\omega_{\ell}:[0,1]\rightarrow F^{-1}(\ell)$ is the unique geodesic
such that $\omega_{\ell}=p_1$ and $\omega_{\ell}(1)=p_2$. Since $F^{-1}(\ell)$ is a Hadamard surface,
we see that $\alpha_{\ell}$ must coincide with $\omega_{\ell}$. Since $\alpha_{\ell}(1)=p_3$, we see that
$p_1, p_2, p_3$ lie in the same geodesic $\omega_{\ell}$, which is again a contradiction to Lemma \ref{metric}. For the same reason,  
iii) cannot happen either.

Hence,  $\alpha_l$ does not intersect $\beta_l$ for any $l\in \partial \bar{D^2}$, thus establishing the first half of Lemma \ref{closed}.

Recall that $H_l(s,t):[0,1] \times [0,1] \rightarrow F^{-1}(l)$ provides  a family of geodesics $H_l(s,.)$ such that
\begin{eqnarray*} H_l(s,0)=\alpha_l(s), \;\; H_l(s,1)=\beta_l(s).\end{eqnarray*}
Since $\alpha_l(s)\neq \beta_l(s)$ for all $s\in [0,1]$, we see that the connecting geodesic is non-constant, and so
\begin{eqnarray}
H_l'(s,0)=\frac{\partial}{\partial t}H(s,t)|_{t=0}\neq 0
\end{eqnarray}
for any $l\in \partial \bar{D^2}$ and $s\in [0,1]$.  This concludes  the proof of Lemma \ref{closed}.\qed
\\
\par
We can now complete the proof of Proposition \ref{homotopy}. Let $\Lambda$ be the composite map
\begin{eqnarray}
\partial \bar{D^2} \times [0,1] \rightarrow SL_{|\bar{D^2}}\simeq \bar{D^2} \times S^1 \rightarrow S^1, \;
(l, s) \mapsto \frac{\varsigma(l,s)}{\|\varsigma(l,s)\|} \mapsto \pi (\frac{\varsigma(l,s)}{\|\varsigma(l,s)\|}),
\end{eqnarray}
where
\begin{eqnarray}
\varsigma: \partial \bar{D^2} \times [0,1] \rightarrow L, \; (l,s) \mapsto dF(H_l'(s,0)).
\end{eqnarray}
It follows from the properties of $H$ that  $\Lambda$ is a homotopy between $\Phi$ and $\Psi$.
This completes  the discussion of Case 3, thus proving  Proposition \ref{homotopy}. \qed
\\
\par
We can now finish  the proof of Theorem \ref{est}.   For each exceptional line $l_{\alpha}$ we constructed  maps $\Phi, \Psi: \partial \bar{D^2}\to S^1$. By Proposition
\ref{homotopy},    $\text{deg}(\Phi)=\text{deg}(\Psi)$. Since $\Psi$ extends to  a continuous map $\bar{\Psi}:\bar{D^2} \rightarrow S^1$,
$\text{deg}(\Psi)=0$  (\cite{His}, p. 126).

Hence $\text{deg}(\Phi)=0$  and therefore, as it was explained  prior to the definition of $\Phi$,    the local index of $s$ at $l_\alpha$ is zero.
Summing over $\alpha$,  and using  the  Poincar\'e-Hopf index theorem (\cite{Bot}, p. 123-124),  the Euler number of the tautological line bundle $L$  would be  zero, a contradiction to $e(L)=-1$.

\vskip10pt
\section{Examples.}
\vskip10pt

 \noindent  A)  It is essential in   Theorem \ref{line}  that   the pre-image  of  each  exceptional line should  have only finitely many components (hypothesis c)), otherwise the fiber may be infinite.

\vskip10pt
In order to see this, consider the simple local   biholomorphism $F:\mathbb C^2 \to \mathbb C^2$, given by
$$F(z_1, z_2)= (e^{z_1}-1, z_2),$$
 so that $\#F^{-1}(0,0)=\infty$.   If $\lambda \in \mathbb C$,   the pre-image of the  line $w_2=\lambda w_1$  is  a graph over $\mathbb C$, hence $1$-connected.   In particular, a) and b) are satisfied.

 On the other hand,
the pre-image of the line $\omega_1=0$ is $\bigcup_{j=1}^{\infty} (\{2\pi j\sqrt{-1}\}\times \mathbb C)$. Hence, although there is only one exceptional line, its pre-image    has infinitely many  components.

\vskip10pt

\noindent B)  The estimate in  Theorem \ref{line} is sharp.

\vskip10pt
We begin with   a locally univalent  entire function $f:\mathbb C\to \mathbb C$ that covers  $0$ exactly $k$ times, $1\leq k <\infty$. These functions can be constructed as follows (we  are grateful to  A.  Weitsman for kindly explaining to us  the case $k=1$).

Let
\begin{eqnarray*} P(z)= \prod_{j=1}^k (z-j), \end{eqnarray*}  so that
\begin{eqnarray*} P'(j)= \prod _{ 1\leq n \leq k, n\neq j} (j-n)\neq 0. \end{eqnarray*}
Let  $\psi$ be any  entire function that solves the finite interpolation problem
$$\psi(j)=\log P'(j), \;\; j=1, \dots, k,$$ for some branch of the logarithm; for instance, it is easy to see  that there is a  (unique) solution $\psi$ that is a polynomial  of degree
$\leq k-1$.

Set
\begin{eqnarray*} \phi(z)=(P(z))^{-1}(e^{\psi(z)}-P'(z)), \end{eqnarray*}  and observe that $z=j$ is a removable singularity for $\phi$, so that $\phi$  extends to an entire function.  Let $\Phi$ be an entire function satisfying  $\Phi'=\phi$  and define
$$f(z)=P(z) e^{\Phi(z)}.$$
Hence, $f$ vanishes precisely at $1, \dots, k$, with multiplicity one at all these points.
Furthermore, $f$ is locally univalent, since
\begin{eqnarray*} f'(z)=(P(z)\phi(z)+P'(z)) e^{\Phi(z)}=e^{\psi(z)}e^{\Phi(z)}\neq 0.\end{eqnarray*}

Consider now the map $F:\mathbb C^2 \to \mathbb C^2$  given by
$F(z_1, z_2)= (f(z_1), z_2)$.
The Jacobian determinant of $F$ at $(z_1, z_2)$ is $f'(z_1)$, hence non-zero. The fiber of $(0,0)$ has precisely $k$ elements, namely $(1,0),  \dots, (k,0)$.

As in Example A),  if $\lambda \in \mathbb C$ the pre-image of the  line $w_2=\lambda w_1$ is  a graph over $\mathbb C$, hence $1$-connected.
On the other hand, if $l$ is the line $w_1=0$, $F^{-1}(l)$  is homeomorphic to
\begin{eqnarray*} f^{-1}(0)\times  \mathbb C=\bigcup_{j=1}^k (\{j\}\times \mathbb C), \end{eqnarray*}
which has  precisely $k$ connected components, the same number of points in  the fiber $F^{-1}(0,0)$. This shows that equality can be achieved in (\ref{sharp}).

\vskip10pt

\noindent \bf  \rm C) The topological hypothesis b), to the effect that that the pre-images of the generic lines passing through $q$ should be   $1$-connected,   is essential for the validity  of Theorem \ref{est}.
\vskip10pt

In order to produce an example illustrating this point, we  need the result   that the  product  of K$\ddot{\text a}$hler manifolds  of negative holomorphic sectional curvature  also has these properties. Since we were unable to  locate  a reference for  the part of this  statement concerning curvature, a proof is included in an appendix.

Every metric on an orientable real surface is K$\ddot{\text a}$hler, relative to the natural complex structure, and so one can endow
$\mathbb C-\{1/2\}$ with a complete conformal  metric  of negative curvature (for instance, by considering  the Weierstrass representation of a catenoid).

Taking  the corresponding  product metric on   $\Omega:=(\mathbb C-\{1/2\})\times  (\mathbb C-\{1/2\})$,
and using  the result in the appendix,  one obtains  a complete K$\ddot{\text a}$hler  metric  of negative holomorphic sectional curvature, as  required  in the statement of Theorem \ref{line}.

Alternatively,  and in a less elementary way, such a metric exists because the Behnke-Stein theorem (\cite{N}, p. 240) applies to show that  $\Omega$, being the product of two open Riemann surfaces,  is also Stein. In particular, $\Omega$  can be given the metric induced from some proper embedding $\Omega \to \mathbb C^N$, where $\mathbb C^N$ is endowed with  a complete K$\ddot{\text a}$hler metric of negative holomorphic sectional curvature \cite{S}.

Next, consider the map
\begin{eqnarray*}G:\mathbb C^2\to \mathbb C^2,
\;\;\; G(z_1, z_2)=(z_1^2-z_1, z_2^2-z_2). \end{eqnarray*}
The  restriction  $F=G|_{\Omega}:\Omega \to \mathbb C^2$ is a local biholomorphism,  hypothesis a) is satisfied, and the fiber
$F^{-1}(0,0)$ consists of  the  four points $(0,0), (0,1), (1,0), (1,1)$.  Giving coordinates
$(w_1, w_2)$  to $\mathbb C^2$ (the target of $F$), one sees that the complex lines $l$ through $(0,0)$ are of the following two types: $w_1=0$,  or  $w_2=k w_1$ for an arbitrary  $k\in \mathbb C$.

 We now analyze when   $F^{-1}(l)$ is connected,  for  $l$  of the second kind.
Since  $G^{-1}(l)$  is given by the zero set of the complex polynomial
\begin{eqnarray*}P_k(z_1, z_2)=z_2^2-kz_1^2 +kz_1-z_2, \end{eqnarray*} and $F^{-1}(l)$ is obtained from $G^{-1}(l)$ by deleting   the  points corresponding to $z_1=1/2$ or $z_2=1/2 $,  one sees that $F^{-1}(l)$ is not simply-connected.

In  order to show that that the pre-image $F^{-1}(l)$ of a generic line  is connected,  it suffices to  show that  the same holds for $G^{-1}(l)$, since the former is obtained from the latter by deleting finitely many points.

Therefore, one needs to know  the  values of $k$  for which the polynomial  $P_k(z_1, z_2)$  is irreducible. The following arguments are standard in elementary algebraic geometry. If $P_k$ is  reducible, the same is  true of  its homogeneization
\begin{eqnarray*} Q_k(z_1, z_2, Z)=Z^2 P_k(z_1/Z, z_2/Z)=
z_2^2-kz_1^2+kz_1Z-z_2Z. \end{eqnarray*}
The usual  criterion for the irreducibility of conics in the projective plane, in terms of  the non-vanishing of the determinant of the associated quadratic form (\cite{Sc}, p. 212), now applies to show that  $Q_k$ is reducible if and only if  $k=0, 1$.

For $k=0$,
$$ F^{-1}\{w_2=0\}=( \{z_2=0\}-\{(1/2,0)\})\bigcup (\{z_2=1\}-\{(1/2,1)\}).$$

For $k=1$, since $P_1(z_1, z_2)= (z_2-z_1)(z_2+z_1-1)$,
$$F^{-1}\{w_2=w_1\}= (\{z_2=z_1\}-\{(1/2, 1/2)\})\bigcup (\{z_2+z_1=1\}-\{(1/2, 1/2)\}).$$

The  pre-image  of the line $\{w_1=0\}$  is

$$ F^{-1}\{w_1=0\}= (\{z_1=0\}-\{(0,1/2)\})\bigcup (\{z_1=1\}-\{(1,1/2)\}).$$
Hence,  there are  three   lines   with disconnected pre-images, namely:   $w_1=0, w_1=w_2$, and $w_2=0$. For each one of them  the pre-image has two connected components (none of which is simply-connected, but this is not required in  hypothesis c) of Theorem \ref{line}). For any other line (i.e., $w_2=kw_1, \; k\neq 0, 1 $), the pre-image is connected but not simply-connected.

Thus,  the number of  connected components of the pre-image of each of the  three  lines  with disconnected pre-image  is two, which  is strictly less than  four, the  number of points in   $F^{-1}(0,0)$.  Hence,  the  estimate (\ref{sharp'})  does not hold.

This is compatible with Theorem \ref{line}. Indeed, although a) and c) are satisfied (in fact, the pre-image of any complex line has only finitely many components),   the pre-image of the line $w_2=k w_1$, $k\neq 0, 1$,   is    connected but   not simply-connected. Since the complement of any finite set  in $\mathbb C \mathbb P^1$ would have to contain points corresponding to such  lines, regardless of what Zariski open set one considers  there  will be  generic lines whose pre-images  are not simply-connected, and so  hypothesis b)  does not hold.

\vskip10pt
\section{Appendix.}
\vskip10pt

\begin{lem}\label {product}
Let $(M_i, J_i, g_i)$ be  K$\ddot{ a}$hler manifolds  of negative  holomorphic sectional curvature,
$j=1,2$.  Then $$(M_1 \times M_2, J_1 \oplus J_2, g_1 \times g_2)$$ is also  a K$\ddot{ a}$hler manifold of negative  holomorphic sectional curvature.
\end{lem}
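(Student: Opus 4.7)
The plan is first to verify that $(M_1 \times M_2, J_1 \oplus J_2, g_1 \times g_2)$ is K$\ddot{a}$hler, and then to compute its holomorphic sectional curvature and show that it is strictly negative. For the K$\ddot{a}$hler property, I would observe that $J := J_1 \oplus J_2$ is integrable since its Nijenhuis tensor decomposes as a direct sum of the (vanishing) Nijenhuis tensors of $J_1$ and $J_2$, that the product metric $g := g_1 \times g_2$ is $J$-Hermitian, and that the associated fundamental $2$-form $\omega = \pi_1^* \omega_1 + \pi_2^* \omega_2$ (where $\pi_i:M_1\times M_2\to M_i$ is the natural projection) is closed because each $\omega_i$ is closed on its factor. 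Hence the product is K$\ddot{a}$hler.

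For the curvature, I would invoke the standard fact that the Levi-Civita connection of a Riemannian product splits as the direct sum of the Levi-Civita connections of the factors, with vanishing ``mixed'' covariant derivatives. A direct computation from this property yields the corresponding decomposition of the Riemann tensor: writing any tangent vector $v$ at a point $(p_1,p_2)$ as $v = v_1 + v_2$ with $v_i \in T_{p_i} M_i$, one has $Jv = J_1 v_1 + J_2 v_2$ and
\begin{equation*}
R(v, Jv, Jv, v) \;=\; R_1(v_1, J_1 v_1, J_1 v_1, v_1) \,+\, R_2(v_2, J_2 v_2, J_2 v_2, v_2),
\end{equation*}
since only terms whose four entries all lie in a single factor survive.

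To finish, I would note that each summand on the right-hand side above is nonpositive by the negative holomorphic sectional curvature hypothesis on $(M_i, J_i, g_i)$, and strictly negative whenever the corresponding $v_i$ is nonzero. Since $v \neq 0$ forces at least one of $v_1, v_2$ to be nonzero, the total expression $R(v, Jv, Jv, v)$ is strictly negative, and dividing by $|v|^4>0$ gives that the holomorphic sectional curvature of the product is negative at every nonzero direction. The main substance of the argument is the decomposition of the curvature tensor on a Riemannian product; the only mildly delicate point is the boundary case in which one of $v_1, v_2$ vanishes, but since the other is then nonzero, strict negativity is preserved. Everything else amounts to routine verifications that the Kähler data on the factors glue to Kähler data on the product.
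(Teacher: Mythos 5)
Your proposal is correct and follows essentially the same route as the paper: split the Levi-Civita connection of the product, observe that in the expansion of the curvature applied to $(v, Jv, \dots)$ only the terms with all four entries in a single factor survive, and conclude strict negativity because $v\neq 0$ forces at least one of $v_1, v_2$ to be nonzero. The only cosmetic difference is that you cite the curvature decomposition as a standard fact while the paper expands all sixteen terms explicitly (and you use the opposite but equally standard ordering convention for the curvature tensor).
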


It is standard that the product manifold is also K$\ddot{\text a}$hler.  In order to show that  $g_1 \times g_2$ has negative holomorphic sectional curvature, let $\nabla$, $\nabla^1$, $\nabla^2$ be the Levi-Civita connections
associated to $g_1 \times g_2$, $g_1$, $g_2$, respectively. For $X_1, X_2 \in \Gamma(TM_1)$, $Y_1, Y_2 \in \Gamma(TM_2)$, we have,  for  $i, j=1,2$,
\begin{eqnarray} \label{China}\nabla_{X_i}Y_j=0, \;\; \nabla_{Y_i}X_j=0, \;\; [X_i, Y_j]=0,\;\;\nabla_{X_i}X_j=\nabla^1_{X_i}X_j, \;\; \nabla_{Y_i}Y_j=\nabla^2_{Y_i}Y_j. \end{eqnarray}
\par For any $Z \in  \Gamma(TM_1 \times TM_2)$, $Z \neq 0$,
we can write $Z= X+Y$, where $X\in \Gamma(TM_1)$ and  $Y \in \Gamma(TM_2)$. Let $R, R_1, R_2$ be the
Riemannian curvature tensors associated to $g_1 \times g_2$, $g_1$, $g_2$, respectively. Letting  $J=J_1 \oplus J_2$, one has
$$R(Z, JZ, Z, JZ)=R(X+Y, J X+J Y, X+Y, J X + J Y)$$
$$=R(X, JX, X, JX)+R(X, JX, X, JY)+R(X, JX, Y, JX)+R(X, JX,Y, JY)$$
$$+R(X, JY, X, JX)+R(X, JY, X, JY)+R(X, JY, Y, JX)+R(X, JY, Y, JY)$$
$$+R(Y, JX, X, JX)+R(Y, JX, X, JY)+R(Y, JX, Y, JX)+R(Y,JX,Y,JY)$$
$$+R(Y,JY,X,JX)+R(Y,JY,X,JY)+R(Y,JY,Y,JX)+R(Y,JY,Y,JY).$$

Using (\ref{China})  and   the definition of the curvature tensor,  one can show that  among  the sixteen summands above  only the first and the last terms  can possibly be  non-zero. Thus,

$$R(Z, JZ, Z, JZ)=R(X,JX,X,JX)+R(Y,JY,Y,JY)$$
$$=R_1(X, J_1 X, X, J_1 X)+ R_2(Y, J_2Y, Y, J_2 Y)<0, $$
as $Z\neq 0$  implies  $X\neq 0$ or $Y\neq 0$, and  the holomorphic sectional curvatures are  negative.

The verification that the intermediate terms vanish is  straightforward. We illustrate this by computing  the second,  fourth, and  seventh  terms:
$$R(X, JX, X, JY)=g(\nabla_X \nabla_{JX}X- \nabla_{JX} \nabla_{X}X- \nabla_{[X,JX]}X, JY)$$

$$=g(\nabla^1_X \nabla^1_{JX}X- \nabla^1_{JX} \nabla^1_{X}X- \nabla^1_{[X,JX]}X, JY)=0,$$
since $JY$ is orthogonal to  $TM_1$.

Similarly,
$$R(X,JX,Y,JY)=g(\nabla_X \nabla_{JX}Y- \nabla_{JX}\nabla_X Y- \nabla_{[X,JX]}Y,JY)=g(0,JY)=0, $$
and
$$R(X,JY,Y,JX)=g(\nabla_X \nabla_{JY}Y-\nabla_{JY}\nabla_XY-\nabla_{[X,JY]}Y, JX)=g(0,JX)=0.$$

\vskip20pt

\vskip10pt

$$
\begin{array}{lcccccccccl}
\text{Xiaoyang Chen}            &&&&&&&&& & \text{Frederico Xavier}\\
\text{Department of Mathematics}      &&&&&&&&& & \text{Department of Mathematics}\\
\text{University of Notre Dame}&&&&&&&&& & \text{University of Notre Dame}\\
\text{Notre Dame, Indiana, 46556, USA}          &&&&&&&&& & \text{Notre Dame, Indiana, 46556, USA}\\
\text{xchen3@nd.edu}           &&&&&&&&& & \text{fxavier@nd.edu}\\
\end{array}
$$


\end{document}